                     \numberwithin{equation}{subsection}
                     \newtheorem{propo}{Proposition}[section]
                     \newtheorem{corol}[propo]{Corollary}
                  \newtheorem{theor}[propo]{Theorem}
                     \newtheorem{lemma}[propo]{Lemma}
                     \theoremstyle{definition}
                     \theoremstyle{remark}
                     \newcommand{\RR}{\mathbb{R}}
              \newcommand{\card}{\operatorname{card}}
                     \newcommand{\id}{\operatorname{id}}
\begin{document}
      \title{Trimming  of finite  metric spaces}
                     \author[Vladimir Turaev]{Vladimir Turaev}
                     \address{%
              Department of Mathematics, \newline
\indent  Indiana University \newline
                     \indent Bloomington IN47405 \newline
                     \indent USA \newline
\indent e-mail: vtouraev@indiana.edu} \subjclass[2010]{54E35}
                    \begin{abstract}
We define    a class of trim   metric spaces and  show that each  finite metric space   is  isometric to  the leaf space of a metric forest with trim  base.
\end{abstract}

\maketitle

\section{Introduction}\label{intro}

  Finite metric spaces naturally arise in   mathematics, informatics and phylogenetics, see   \cite{DHM},   \cite{Li}, \cite{SS}.  We introduce   here  a class of trim   metric spaces and  show that every  finite metric space   is    the leaf space of a metric forest with trim   base. We   use the language of pseudometrics and begin by recalling the relevant terminology. 
  A \emph{pseudometric space} is a pair   consisting of a set~$X$ and a mapping $d: X\times X \to \RR $ (the \emph{pseudometric}) such that for all $x,y,z\in X$,   $$d(x,x)=0, \quad d(x,y)=d(y,x)\geq 0, \quad d(x,y)+ d(y,z)\geq d(x,z).$$
Pseudometric spaces $(X,d)$ and $(X',d')$ are \emph{isometric} if there is a bijection $X \to X'$ carrying~$d$ to~$d'$. A map $q:X\to X'$   is \emph{non-expansive} if $d(x,y) \geq d'(q(x), q(y))$ for all $x,y \in X$.  A  \emph{metric space} is a pseudometric space $(X,d)$ such that $d(x,y) >0$ for all  distinct $x, y\in X $ (and then~$d$ is a \emph{metric}).

   Given  points $x,y,z$ of a pseudometric space $ (X,d)$,  we say following K. Menger \cite{Me} that~$x$ lies \emph{between}~$y$ and~$z$  if   $x,y,z$ are pairwise distinct and  \begin{equation}\label{11} d(x,y)+d(x,z)= d(y,z). \end{equation}
We say that a finite pseudometric space $(X,d)$ is   \emph{trim} if  either $\card (X)\leq 1$ or  every point of~$X$ lies between two other   points of~$X$ (that is for each $x\in X$, there are distinct   $y,z\in X\setminus \{x\}$ satisfying \eqref{11}). It is easy to give examples of trim pseudometric spaces. 
For instance, any finite set with $\geq 3$ elements and zero pseudometric $d=0$ is trim. On the other hand, the class of
  trim metric spaces is quite narrow.
In particular, there are no trim   metric spaces  having just two points or three points.
 A
finite subset  of a Euclidean space   with   $  \geq 2$ points and with the induced metric  cannot be trim. Indeed, such a subset   must  contain   a pair of points lying at the maximal distance;   these points cannot lie between   other  points of the subset.   Consequently,    trim  finite metric spaces having more than one point do not   isometrically embed into    Euclidean spaces.

We give here three examples of trim   metric spaces:

  (1)  the set of words of a fixed finite length $\geq 2 $ in a given finite  alphabet  with the Hamming distance defined as the number of positions at which the   letters of two words   differ;

(2) a  subset   of a  Euclidean circle $C\subset \RR^2 $   meeting each   half-circle in $C$   in at least three points; here the distance between   points  is the length of the shorter arc  in $C$ connecting these points;

(3)  the 4-point metric space $ \{a,b,c,e\}$   with metric $d$ defined by
 $$d(a,b)=d(c,e)=r, \,\,\,\, d(a,c)=d(b,e)=s,\,\,\,\,  d(a,e)=d(b,c)=r+s,$$
 where $r,s >0$ are    real numbers. This  is a special case of Example~2 arising from 4-point subsets of~$C$ formed by two pairs of diametrically opposite points.


Our main construction   derives from any finite pseudometric   space~$X$    a trim  finite metric space   $c(X)$, called the \emph{trim core} of~$X$, and  a surjective non-expansive map   $q_X:X\to c(X)$.  The construction  of $c(X)$ and $q_X$ is   functorial:  each isometry of finite pseudometric spaces $\varphi:X\to X'$ induces an isometry $c(\varphi):c(X)\to c(X')$ such that $q_{X'} \varphi= c(\varphi)\,  q_X: X\to c(X')$.
If~$X$ is  a trim metric space, then $c(X)=X$ and $q_X=\id_X$.



    Pseudometric spaces whose trim core  is a point  can be described  in terms of trees.  By a  \emph{tree}, we   mean  a  metric   tree, i.e.,  a connected graph without cycles   whose    edges are  endowed with non-negative real numbers called the  \emph{lengths}.     The set of vertices of a  tree~$\tau$ carries the \emph{path pseudometric}   $  d_\tau $: the distance  between any   vertices  of~$\tau$   is the sum of the lengths of the edges  of~$\tau$  forming the  shortest path between these vertices.
 The \emph{leaf space} of a   tree~$\tau$ is the set of all degree~1 vertices of~$\tau$ together with the     pseudometric~$d_\tau$ restricted to this set. A tree  is  \emph{finite} if it has a finite number of vertices and edges.

\begin{theor}\label{thtree} The trim core of a  finite pseudometric space  $X$ is a   point   if and only if $\card(X)=1$ or $X$   is isometric to the leaf space of a     finite  tree.
  \end{theor}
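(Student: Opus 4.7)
The proof splits into two implications, each handled by induction on $\card(X)$.

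For $(\Leftarrow)$, suppose $X$ is isometric to the leaf space of a finite tree $\tau$. The key geometric observation is that every leaf $v$ of $\tau$ whose incident edge has positive length is an extreme point of the leaf-space metric: were $v$ to lie between two other leaves $v',v''$, the unique tree path from $v'$ to $v''$ would have to pass through $v$, which is impossible because $v$ has degree one. After contracting zero-length edges (which does not alter the leaf space), we conclude that $X$ is not trim whenever $\card(X)\geq 2$. Invoking the construction of $c$ together with functoriality, one checks that pruning a leaf of $\tau$ is matched by a corresponding reduction in $c(X)$; iterating, $\tau$ collapses to a single vertex and $c(X)$ to a point.

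For $(\Rightarrow)$, assume $c(X)$ is a point and $\card(X)\geq 2$; we construct a finite tree $\tau$ whose leaf space is isometric to $X$. Since $X\neq c(X)$, the space $X$ is not trim, so there exists an extreme point $x\in X$, i.e., one that lies between no two other points. Put $X':=X\setminus \{x\}$ with the restricted metric. The first technical step is to verify that $c(X')$ is again a point, which we do by a direct analysis of the construction of $c$ and the role played by the extreme point $x$. By the inductive hypothesis, $X'$ is isometric to the leaf space of a finite tree $\tau'$. Next, set
$$\ell := \tfrac{1}{2}\min_{y,z\in X'}\bigl(d(x,y)+d(x,z)-d(y,z)\bigr),$$
which is strictly positive because $x$ is extreme, and attach $x$ to $\tau'$ by a new edge of length $\ell$ at the unique point $p\in \tau'$ satisfying $d_{\tau'}(p,y)=d(x,y)-\ell$ for all $y\in X'$. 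The resulting finite tree $\tau$ has leaf space isometric to $X$.

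The principal obstacle is the existence of the attachment point $p$ on $\tau'$: the function $y\mapsto d(x,y)-\ell$ on $X'$ must coincide with $d_{\tau'}(p,\cdot)$ for some (possibly interior) point $p$ of $\tau'$. A minimizing pair $(y_0,z_0)$ for the expression defining $\ell$ forces $p$ to lie at a prescribed location on the path from $y_0$ to $z_0$ in $\tau'$; one then verifies that the values $d(x,w)-\ell$ for the remaining $w\in X'$ are consistent with the tree distances from this $p$. The required check is essentially a four-point-type condition in $\tau'$, and this is where the full hypothesis---that $c(X)$ is a single point, not merely that $x$ is extreme---plays its essential role.
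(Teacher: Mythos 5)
Your proposal takes a genuinely different route from the paper, but both implications contain gaps that are not cosmetic.

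\textbf{The $(\Leftarrow)$ direction.} Your key observation — that a leaf whose incident edge has positive length cannot lie between two other leaves — is correct. But the next claim, that after contracting zero-length edges $X$ is not trim whenever $\card(X)\geq 2$, is \emph{false} for pseudometric spaces (and the theorem is stated for pseudometric spaces). Counterexample: a tree with two interior vertices $u_1,u_2$ joined by an edge of length $1$, with leaves $v_1,v_2$ attached to $u_1$ and $v_3,v_4$ attached to $u_2$, all four leaf edges of length $0$. The leaf space $X=\{v_1,\dots,v_4\}$ has $\underline d\equiv 0$ and so \emph{is} trim, yet $\card(X)=4$ and $c(X)=\{pt\}$. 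Contracting interior zero-length edges is legitimate; contracting zero-length leaf edges alters the leaf space, and leaf edges of length zero are exactly the ones producing non-extreme leaves. You also rely on the assertion that ``pruning a leaf of $\tau$ is matched by a corresponding reduction in $c(X)$''; this is stated, not argued. The paper's mechanism is quite different: it computes $d^\bullet$ explicitly as the path pseudometric of the tree $\tau^\bullet$ obtained by zeroing leaf-edge lengths (Lemma \ref{cloleththree}), shows that the metric quotient identifies exactly the contiguous leaves, and then inducts on $\card(X)$; Lemma \ref{lemma54} guarantees contiguous leaves exist.

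\textbf{The $(\Rightarrow)$ direction.} You again open with ``since $X\neq c(X)$, the space $X$ is not trim,'' which fails for the same reason as above (a trim pseudometric space with $\geq 2$ points can still have $c(X)=\{pt\}$, since $t$ is the metric quotient of $(X,d^\bullet)$, not $(X,d^\bullet)$ itself). More seriously, the two claims you defer — that $c(X')$ is again a point after deleting an extreme point $x$, and that the attachment point $p$ exists on $\tau'$ — are precisely where all the content of the theorem lives, and they are not carried out. The existence of $p$ is essentially the classical statement that a four-point-type compatibility holds, and deducing it from $c(X)=\{pt\}$ \emph{is} the theorem, not an auxiliary check. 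The paper avoids this entirely: it already has a constructive machine (Lemma \ref{th3} / Theorem \ref{th1}) producing a metric forest with base $c(X)$ and leaf space $X$ from the iterated trimmings; when $c(X)$ is a point, that forest is a single rooted tree, and you are done. Your ``peel one extreme leaf, recurse, re-attach'' strategy is a reasonable alternative in spirit (it mirrors standard proofs of the four-point theorem), but as written it substitutes one unproved characterization for another, and it also needs the false ``not trim'' step repaired by handling trim pseudometric spaces with $c(X)=\{pt\}$ separately.
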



Besides the trim core, each finite pseudometric space determines   a bunch of rooted trees or, more precisely, a metric forest. 
A \emph{rooted tree} is   a tree    having at least one edge and   a distinguished vertex~$\ast$, the \emph{root}.  A \emph{leaf} of a rooted tree~$\tau$  is a   vertex of~$\tau$ which is distinct from~$\ast$ and has degree 1. The set of all leaves of $\tau$ is denoted by $\partial \tau$; the  pseudometric space $({\partial \tau}  , d_\tau \vert_{{\partial \tau}  })$ is    the \emph{leaf space} of~$\tau$. A \emph{metric forest} $\zeta$  consists of  a metric space $B=(B,d_B)$ called the \emph{base}  and a family of rooted trees $  (\zeta_a,  \ast_a)_{a\in B}$  called the  \emph{components}. By vertices, edges and leaves of~$\zeta$ we  mean the vertices, edges and leaves of the components of~$\zeta$.  The \emph{leaf space} of~$\zeta$   is the disjoint union  $ {\partial \zeta}=\amalg_{a\in B}   \,  {\partial \zeta_a} $    with the following    pseudometric $d_\zeta$:
 for any  $x\in {\partial \zeta_a}  , y\in {\partial \zeta_b}$  with   $a, b \in B$,
\begin{equation}\label{defdzeta}
    d_\zeta(x,y)=\left\{
                \begin{array}{ll}
                  d_{\zeta_a} (x, y) \quad {\text {if}} \,\,   a=b , \\
                  d_{\zeta_a} (x, \ast_a) + d_B(a,b) + d_{\zeta_b} (y, \ast_b ) \quad {\text {if}} \,\,  a \neq b  .
                \end{array}
              \right.
\end{equation}
 For example, any metric space~$B$ determines a metric forest   $  \{[0,1]_a\}_{a\in B}$ where $[0,1]_a$ is a copy of the segment $[0,1]$   with vertices $0,1$, root~$0$, and zero length of the only edge. The leaf space   of this metric forest is isometric to~$B$. 

A metric forest    is  \emph{finite} if its base   is a finite metric space    and all its components are finite rooted trees. 



    \begin{theor}\label{th1} Any finite pseudometric   space~$X$ determines (in a canonical way)  a finite metric forest   with     leaf space~$X$ and base  $c(X)$.
  \end{theor}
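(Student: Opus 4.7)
The plan is to build the forest one component per point of the base $B:=c(X)$, with a rooted tree $\zeta_a$ attached to each fiber $X_a:=q_X^{-1}(a)\subseteq X$. If $c(X)$ is a single point, Theorem~\ref{thtree} directly gives $X$ as the leaf space of a finite tree $\tau$, and one takes the forest whose only component is $\tau$ with any fixed degree-$1$ vertex as root; so from now on assume $\card(c(X))\geq 2$.

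The first step is to define a height function $h_a\colon X_a\to\RR_{\geq 0}$ for each $a\in c(X)$ by
$$h_a(x)\;:=\;d_X(x,y)\;-\;d_{c(X)}\bigl(a,q_X(y)\bigr)\qquad\text{for any } y\in X\setminus X_a.$$
Non-negativity is immediate from the non-expansiveness of $q_X$, but the key point is independence of the choice of $y$, which is equivalent to the additive decomposition
$$d_X(x,y)=h_a(x)+d_{c(X)}(a,b)+h_b(y)\qquad(x\in X_a,\;y\in X_b,\;a\neq b).$$
I would derive this from the explicit construction of $c(X)$ and $q_X$ supplied earlier in the paper, in which $q_X$ identifies points of $X$ whose roles in the betweenness relations of $X$ are redundant, so that all inter-fiber information is carried by distances in $c(X)$ together with the scalar heights $h_a(x)$.

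Second, for each $a$ I would introduce the pseudometric space $Y_a:=X_a\sqcup\{\ast_a\}$ with $d(x,\ast_a):=h_a(x)$ and other distances inherited from $X_a\subseteq X$. The triangle inequality in $Y_a$ follows from the one in $X$ by choosing a witness $y\in X\setminus X_a$ and invoking the additive decomposition. I would then verify that the trim core of $Y_a$ is a single point: intuitively, each point of $X_a$ becomes between $\ast_a$ and some other point, while $\ast_a$ is between two points of $X_a$ since adjoining it undoes the quotient on the fiber. Theorem~\ref{thtree} then yields a finite tree whose leaf space is $Y_a$; declaring the vertex corresponding to $\ast_a$ as root produces a rooted tree $\zeta_a$ whose leaf space $\partial\zeta_a$ is canonically identified with $X_a$.

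Assembling $(\zeta_a,\ast_a)_{a\in c(X)}$ into a finite metric forest $\zeta$ with base $c(X)$ gives $\partial\zeta=\amalg_a X_a = X$ as a set, and checking $d_\zeta=d_X$ from \eqref{defdzeta} is immediate: the case $a=b$ holds by construction of $\zeta_a$ and the case $a\neq b$ is the additive decomposition itself. Canonicity follows from the canonicity of the trim core together with the uniqueness, up to canonical isometry, of the finite tree associated by Theorem~\ref{thtree} to a pseudometric space with singleton trim core. The main obstacle lies in the first step: verifying that inter-fiber distances split additively through $q_X$, so that $h_a$ is well-defined and the rest of the construction closes up. That compatibility is the delicate structural property of the trim core, and the bulk of the work will lie there.
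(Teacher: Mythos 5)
Your proposal has two interrelated gaps, and as a result it does not constitute a proof.

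First, the additive decomposition
$$d_X(x,y)=h_a(x)+d_{c(X)}(a,b)+h_b(y)\qquad(x\in X_a,\;y\in X_b,\;a\neq b)$$
is not something you can read off ``from the explicit construction of $c(X)$ and $q_X$'' in a soft way: it \emph{is} the crux of the theorem. The paper establishes it through a telescoping computation over the whole iterated-trimming tower. Writing $x_0=x$, $x_{i+1}=p_i(x_i)$ (and similarly for $y$), with $d_i$ the metric on $t^i(X)$, the identity $\underline{d_i}(x_i)+\underline{d_i}(y_i)=d_i(x_i,y_i)-d_{i+1}(x_{i+1},y_{i+1})$, valid as long as $x_i\neq y_i$, is summed from $i=0$ to $n-1$ (where $n$ is the height) to give $d_X(x,y)-d_{c(X)}(a,b)=\sum_i\underline{d_i}(x_i)+\sum_i\underline{d_i}(y_i)$. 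That is your $h_a(x)+h_b(y)$ with $h_a(x)=\sum_{i<n}\underline{d_i}(x_i)$, and I do not see how to reach it without performing exactly this computation. You acknowledge the gap, but acknowledging it does not fill it, and nothing in the rest of the argument reduces the difficulty.

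Second, and more seriously, your argument is circular. You invoke Theorem~\ref{thtree} twice: once for the base case $\card(c(X))=1$, and once to convert each $Y_a=X_a\sqcup\{\ast_a\}$ into a tree after showing $c(Y_a)=\{pt\}$. But in the paper the implication you need from Theorem~\ref{thtree} --- that singleton trim core implies being the leaf space of a finite tree --- is deduced from Theorem~\ref{th1} (see the (i) $\Rightarrow$ (ii) step of Lemma~\ref{leththree}). So your proof uses the very theorem it is trying to establish. To break the cycle you would have to prove the single-point case of Theorem~\ref{th1} independently, which again amounts to the construction in the paper's Lemma~\ref{th3}. Separately, your justification that $c(Y_a)=\{pt\}$ is only a sketch that a certain space is \emph{trim}, not that it has singleton trim core; these are very different properties (see Example~3 in the introduction), so even that sub-step is not actually argued.

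The paper's route is more economical: it builds the entire forest at once from the trimming tower (vertices $\amalg_i t^i(X)$, edges of length $\underline{d_i}(v)$ joining $v$ to $p_i(v)$), identifies the leaf space and roots directly, and verifies $d_\zeta=d$ by telescoping. Your fiber-by-fiber reduction, even if patched, would end up redoing the same computation plus an extra reduction step and a dependency you cannot afford.
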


For finite pseudometric spaces $X,Y$, we write 
$X \geq Y$ if there is a finite metric forest  with leaf space~$X$ and   base~$Y$.   Theorem~\ref{th1}   implies that  $X \geq c(X)$ for any finite pseudometric   space~$X$. We now explain  that  this property may be used to characterize  $c(X)$ at least up to isometry.

\begin{theor}\label{corol} For any finite metric forest $\zeta$ with trim base~$B$, we have    $c(\partial \zeta) =B$ (up to isometry).
  \end{theor}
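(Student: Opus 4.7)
My plan is to produce an isometry $\varphi\colon B \to c(\partial\zeta)$ by exploiting the component structure of $\zeta$. Set $X = \partial\zeta$ and define the \emph{component projection} $\pi\colon X \to B$ by $\pi(x) = a$ for $x \in \partial\zeta_a$. Formula \eqref{defdzeta} immediately shows that $\pi$ is surjective and non-expansive (the same-component case gives $d_\zeta(x,y) \geq 0 = d_B(a,a)$, and the different-component case contains $d_B(a,b)$ as a summand). The theorem will follow if I can show that $\pi$ has the same fibers as the trim-core quotient $q_X\colon X \to c(X)$ and that the induced bijection between the quotient spaces is distance-preserving.

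The first step is to establish that any two leaves in the same component $\partial\zeta_a$ are identified by $q_X$, so that $\pi$ descends to a surjective non-expansive map $\bar\pi\colon c(X) \to B$ with $\pi = \bar\pi \circ q_X$. The guiding intuition is Theorem~\ref{thtree}: the leaf space of a single finite tree has trim core a point, so each fiber $\partial\zeta_a$, considered in isolation, is already fully collapsible. I would establish this rigorously by comparing $\zeta$ with the canonical forest from Theorem~\ref{th1} (which has leaf space~$X$ and base $c(X)$), or directly from the construction of $c$ by an inductive argument showing that the collapse happens component by component inside the ambient forest.

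The main obstacle, and the heart of the proof, is upgrading $\bar\pi$ from a non-expansive surjection to an isometry. This is where the trimness of~$B$ is decisive. For distinct $a,b \in B$, I need $d_{c(X)}(\bar\pi^{-1}(a), \bar\pi^{-1}(b)) = d_B(a,b)$. Trimness gives, for each $a \in B$, two other points $a', a'' \in B$ with $a$ between $a'$ and $a''$; lifting this relation via \eqref{defdzeta} yields a genuine betweenness \eqref{11} in $X$ once one chooses representatives $y \in \partial\zeta_{a'}$, $z \in \partial\zeta_{a''}$ that are as close as possible to their respective roots. Stringing together the resulting betweenness data, inherited by $c(X)$ through non-expansivity and the functorial behaviour of $q_X$, rules out any strict distance contraction in $\bar\pi$ and forces $\bar\pi$ to be an isometry. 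The subtle point is choosing the right representatives so that \eqref{11} holds on the nose in $X$ and survives under $q_X$; one may need to invoke Theorem~\ref{th1} a second time, comparing the given forest $\zeta$ with the canonical one over $c(X)$, to conclude that the two trim bases are forced to coincide.
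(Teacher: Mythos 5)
Your plan — identify the fibers of the component projection $\pi\colon\partial\zeta\to B$ with the fibers of $q_{\partial\zeta}\colon\partial\zeta\to c(\partial\zeta)$, then show the resulting bijection is an isometry — diverges from the paper's route. The paper instead shows (Lemma~\ref{lemma42e}) that repeatedly trimming $\partial\zeta$ is realized by passing to successively smaller forests over the same base~$B$, until one reaches a forest whose leaf space is a drift of~$B$; then Remark~\ref{example}.3 says one more trimming produces $t(B)=B$. Both pieces of your plan have real gaps.

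For the ``same fibers'' step you offer two options: compare $\zeta$ with the canonical forest of Theorem~\ref{th1}, or argue inductively that the collapse happens component by component. The first is circular — the canonical forest has components that are, by construction, the fibers of $q_{\partial\zeta}$, so comparing it with $\zeta$ is exactly the statement to be proved, namely that those fibers coincide with the fibers of~$\pi$. The second would require knowing precisely which pairs of leaves satisfy $d_\zeta^\bullet(u,v)=0$ at each trimming stage, which is the content of Lemma~\ref{lemma42enm} (the forest analogue of Lemma~\ref{cloleththree}) together with the reduction to reduced forests in Lemma~\ref{lemma42e}; you would be reproving the paper's key lemmas, not bypassing them.

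The ``isometry'' step is the more serious problem. Fix $a$ between $a',a''$ in $B$ and take any $y\in\partial\zeta_{a'}$, $z\in\partial\zeta_{a''}$, $x\in\partial\zeta_a$. Formula~\eqref{defdzeta} gives
\[
d_\zeta(y,x)+d_\zeta(x,z)-d_\zeta(y,z)=2\,d_{\zeta_a}(x,\ast_a),
\]
so $x$ lies between $y$ and $z$ in $\partial\zeta$ if and only if $d_{\zeta_a}(x,\ast_a)=0$ — an obstruction that depends only on $x$, not on how close $y$ or $z$ are to their roots. If every edge of $\zeta_a$ adjacent to $\ast_a$ has positive length, no such $x$ exists and the proposed lift of betweenness fails outright. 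Moreover, even where betweenness does hold in $\partial\zeta$, it does not descend along the non-expansive map $q_{\partial\zeta}$: non-expansive maps can strictly shrink the two side lengths $d(y,x)$ and $d(x,z)$ while shrinking $d(y,z)$ by less, destroying the equality~\eqref{11}. (The functoriality mentioned in the introduction is only with respect to isometries, not non-expansive maps like~$\pi$.) So the chain ``betweenness in $B$ $\Rightarrow$ betweenness in $\partial\zeta$ $\Rightarrow$ betweenness in $c(\partial\zeta)$ $\Rightarrow$ $\bar\pi$ an isometry'' breaks at both intermediate links. The paper avoids this entirely: once $t^N(\partial\zeta)$ is recognized as a drift $B^f$ of $B$, the drift is cancelled in one step of trimming ($t(B^f)=t(B)=B$ by Remark~\ref{example}.3), and no comparison of betweenness relations across stages is needed.
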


\begin{corol}\label{corollll} If a finite pseudometric   space~$X$ and a trim finite metric space~$B$ satisfy  $X\geq B$, then $c(X) =B$ (up to isometry).
  \end{corol}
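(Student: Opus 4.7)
The plan is to deduce this corollary directly from Theorem~\ref{corol} via the functoriality of the trim core $c$; no fresh combinatorial or metric argument should be needed.

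First I would unpack the hypothesis. By the definition of the relation $\geq$ given just before Theorem~\ref{corol}, the assumption $X \geq B$ provides a finite metric forest $\zeta$ whose leaf space $\partial \zeta$ is isometric to~$X$ and whose base is~$B$. Fix such a $\zeta$ and a concrete isometry $\varphi \colon X \to \partial \zeta$.

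Next, since $B$ is assumed to be a trim finite metric space, Theorem~\ref{corol} applies to $\zeta$ and yields an isometry $c(\partial \zeta) \cong B$. It remains to move the trim core from $\partial \zeta$ across to~$X$. This is exactly the content of the functoriality of $c$ recorded in the introduction: the isometry $\varphi$ induces an isometry $c(\varphi) \colon c(X) \to c(\partial \zeta)$. Composing with the isometry produced by Theorem~\ref{corol} gives $c(X) \cong c(\partial \zeta) \cong B$, which is the desired conclusion up to isometry.

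There is no real obstacle in this argument; it is a packaging corollary that reads off two inputs (Theorem~\ref{corol} and the functoriality of~$c$). The only point requiring a moment's care is the distinction between ``the leaf space of $\zeta$ equals $X$'' and ``is isometric to $X$''\textemdash the relation $\geq$ is only defined up to isometry, so the functoriality statement, rather than a bare application of Theorem~\ref{corol}, is what produces the final identification $c(X) \cong B$.
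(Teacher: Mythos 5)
Your proof is correct and is essentially the argument the paper intends: the corollary is just a repackaging of Theorem~\ref{corol}, with the isometry-invariance of $c$ (stated via functoriality in the introduction) transporting the conclusion from $\partial\zeta$ to $X$. The paper gives no separate proof of the corollary, so your spelled-out version, including the careful note about ``equals'' versus ``isometric to,'' matches the intended reading.
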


   We say that two  finite pseudometric spaces are   \emph{trim equivalent} if their trim cores are isometric.
  The next theorem generalizes Theorem~\ref{corol} to arbitrary finite metric forests.

\begin{theor}\label{thh2} The    leaf space and the base of any finite metric forest  are trim equivalent to each other.
  \end{theor}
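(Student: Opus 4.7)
The plan is to reduce Theorem~\ref{thh2} to Theorem~\ref{corol} by grafting the given forest on top of an auxiliary forest whose base is already trim. Concretely, write $\zeta$ as a forest with base $B$ and components $(\zeta_a,\ast_a)_{a\in B}$; the goal is to produce a new finite metric forest $\widetilde\zeta$ with trim base $c(B)$ whose leaf space is isometric to $\partial\zeta$, after which Theorem~\ref{corol} gives $c(\partial\zeta)=c(\partial\widetilde\zeta)=c(B)$, i.e., $\partial\zeta$ and $B$ are trim equivalent.

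First I would invoke Theorem~\ref{th1} applied to the finite pseudometric space~$B$: this yields a finite metric forest $\eta$ with leaf space~$B$ and base $c(B)$. Let $(\eta_c,\ast_c)_{c\in c(B)}$ denote its components, so each $a\in B$ corresponds to a unique leaf $\ell_a$ of some component $\eta_{c(a)}$. I would then build $\widetilde\zeta$ by grafting: for every $c\in c(B)$, form the rooted tree $T_c$ obtained from $\eta_c$ by identifying, for each leaf $\ell_a$ of $\eta_c$, the root $\ast_a$ of $\zeta_a$ with $\ell_a$; the new tree $T_c$ inherits the root~$\ast_c$. The forest $\widetilde\zeta$ has base $c(B)$ and components $(T_c,\ast_c)_{c\in c(B)}$; its set of leaves is canonically the disjoint union $\sqcup_{a\in B}\partial\zeta_a=\partial\zeta$.

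The main verification is that the pseudometric $d_{\widetilde\zeta}$ on the leaf set coincides with $d_\zeta$. This is a case analysis using the forest formula \eqref{defdzeta}. For $x\in\partial\zeta_a$ and $y\in\partial\zeta_{a'}$ with $a=a'$ the equality is immediate since $\zeta_a$ sits as a subtree of $T_{c(a)}$. If $a\neq a'$ but $\ell_a,\ell_{a'}$ lie in the same $\eta_c$, the shortest path in $T_c$ from $x$ to $y$ passes through $\ast_a=\ell_a$ and $\ast_{a'}=\ell_{a'}$, giving $d_{T_c}(x,y)=d_{\zeta_a}(x,\ast_a)+d_{\eta_c}(\ell_a,\ell_{a'})+d_{\zeta_{a'}}(\ast_{a'},y)$, and $d_{\eta_c}(\ell_a,\ell_{a'})=d_\eta(\ell_a,\ell_{a'})=d_B(a,a')$ because $\partial\eta\cong B$ isometrically. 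If the leaves $\ell_a,\ell_{a'}$ lie in different components of $\eta$, then $x,y$ lie in different components of $\widetilde\zeta$, and two applications of \eqref{defdzeta} (one in $\widetilde\zeta$, one in $\eta$) combine to the same total $d_{\zeta_a}(x,\ast_a)+d_B(a,a')+d_{\zeta_{a'}}(\ast_{a'},y)=d_\zeta(x,y)$. In every case the values agree.

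Once the isometry $\partial\widetilde\zeta\cong\partial\zeta$ is established, the conclusion follows at once: $\widetilde\zeta$ is a finite metric forest whose base $c(B)$ is trim, so Theorem~\ref{corol} yields $c(\partial\widetilde\zeta)=c(B)$, hence $c(\partial\zeta)\cong c(B)$, proving that $\partial\zeta$ and $B$ are trim equivalent. I expect the bookkeeping of the grafting construction and the case analysis for $d_{\widetilde\zeta}=d_\zeta$ to be the only real content of the proof; once the grafted forest is in hand, Theorem~\ref{corol} does all of the genuinely new work.
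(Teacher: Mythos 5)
Your proposal is correct and takes essentially the same approach as the paper: the paper first records the transitivity of the relation $\geq$ via exactly the same grafting construction (identify each leaf $a$ of $\eta$ with the root $\ast_a$ of $\zeta_a$), then applies Theorem~\ref{th1} to get $B\geq c(B)$ and Theorem~\ref{corol} to conclude $c(\partial\zeta)=c(B)$. You have merely unrolled the transitivity lemma in place and supplied the routine verification that the grafted forest has the right leaf pseudometric, which the paper leaves as ``clearly.''
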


We  define the trim core in Section~\ref{section0} and prove    Theorems   \ref{thtree}--\ref{thh2} in Sections \ref{section3}--\ref{Proof  of Theorem}.  

  \section{Trimming and the trim core}\label{section0}

  \subsection{Trim spaces}\label{Preliminaries} Given a set~$X$ and a  map  $d:X\times X\to \RR$, we  will  use the same symbol~$d$ for the map  $X\times X\times X \to \RR $ defined by
$$d(x,y,z)=  \frac{ d(x, y)+ d(x, z)-d(y,z)}{2}  $$
 for all $x,y,z \in X$.
We derive from any pseudometric $d: X \times X \to \RR$  a function $\underline d: X \to \RR $:
if~$X$ has  only one  point, then $\underline d =0$; if $X$ has two  points $x,y$, then $\underline d(x)=\underline d(y)= d(x,y)/2$; if~$X$ has $\geq 3$ points, then for all $x \in X$,
$$ {\underline d} (x)= \inf_{y,z \in X\setminus \{x\}, y \neq z }   d(x, y,z)  \geq 0 . $$

We say that a pseudometric space   $(X,d)$ is \emph{trim} if $\underline d=0$. It is clear that for finite~$X$ this definition is equivalent to the one in the introduction.

\subsection{Metric quotient} Any pseudometric space $(X,d)$ determines a metric space $(\widetilde X, \widetilde d)$ called the  \emph{metric quotient} of   $(X,d)$. Here $ \widetilde X = X/{\sim_d }$  where $\sim_d$ is the equivalence relation  on~$X$ defined by $x\sim_d y$ if $d(x,y)=0$ for   $x,y\in X$. The  metric $\widetilde d$ in $\widetilde X$ is uniquely defined by
$ {\widetilde d}(\widetilde x, \widetilde y)=d(x,y)$ where   $x,y$ are any points of~$X$ and $\widetilde x, \widetilde y \in  \widetilde X$ are their equivalence classes.

The  metric quotient of  a trim pseudometric space   may be non-trim. For example, let $X=\{x,y,a,b\}$ and let the distance   between any point of the set $\{x,y\}$ with any point of $\{a,b\}$ be 1 while all the other distances between points of $X$ be~$0$. The resulting   pseudometric space~$X$ is trim  (for instance,   $x$ lies between~$y$ and~$a$). However, the metric space~$\widetilde X$ has   two points and is not trim.

  \subsection{Drift}\label{Drift}\label{notation} Given a set~$X$, a  map  $d:X\times X\to \RR$ and a function $f: X\to \RR$, we define a  map  $d^f:X\times X\to \RR$ by the following rule: for any $x,y \in X$,
  \begin{equation*}
    d^f(x,y)=\left\{
                \begin{array}{ll}
                0 \quad {\text {if}} \,\,  x=y , \\
                  d(x,y)+f(x)+ f(y) \quad {\text {if}} \,\,   x \neq y   .
                \end{array}
              \right.
\end{equation*}
 We say that $d^f$ is obtained from~$d$ by  a \emph{drift}. The idea here is that each point of~$X$ is carried away from all the other points  by the distance $f(x)$.


   \begin{lemma}\label{le49} If $(X,d)$ is a  pseudometric space and   $f:X\to \RR$ is a function such that $f(x)\geq -{\underline d}(x)$ for all $x\in X$, then $(X, {d^f} )$ is a   pseudometric space. Moreover, if $\card(X) \geq 3$ or $\card(X) =2$ and $f$ is a constant function, then   $\underline{d^f}= \underline d+f$.
\end{lemma}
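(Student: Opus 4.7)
The plan is to verify separately the two assertions: (i) that $d^f$ is a pseudometric, and (ii) that the formula $\underline{d^f} = \underline{d}+f$ holds under the stated cardinality hypothesis. The identities $d^f(x,x)=0$ and $d^f(x,y)=d^f(y,x)$ are immediate from the definition, so the substantive work is nonnegativity, the triangle inequality, and the subsequent computation of $\underline{d^f}$.

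For nonnegativity of $d^f(x,y)$ with $x\neq y$, the case $\card(X)\leq 2$ is a direct verification using $\underline{d}(x)=d(x,y)/2$. When $\card(X)\geq 3$ I would exploit the symmetric identity
$$d(x,y,z)+d(y,x,z)=d(x,y),$$
valid for any third point $z\in X\setminus\{x,y\}$; taking infima over such $z$ gives $\underline{d}(x)+\underline{d}(y)\leq d(x,y)$, and hence $d^f(x,y)\geq d(x,y)-\underline{d}(x)-\underline{d}(y)\geq 0$. For the triangle inequality, only the case of three pairwise distinct points $x,y,z$ needs attention; expanding $d^f(x,y)+d^f(y,z)\geq d^f(x,z)$ reduces it to $f(y)\geq -d(y,x,z)$, which is immediate from $f(y)\geq -\underline{d}(y)\geq -d(y,x,z)$ when $\card(X)\geq 3$ and is vacuous otherwise.

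For (ii), the key observation is that when $x,y,z\in X$ are pairwise distinct, the expansion of $d^f(x,y,z)$ lets the $f(y)$ and $f(z)$ contributions cancel, leaving
$$d^f(x,y,z)=d(x,y,z)+f(x).$$
Taking the infimum over distinct $y,z\in X\setminus\{x\}$ yields $\underline{d^f}(x)=\underline{d}(x)+f(x)$ when $\card(X)\geq 3$. For $\card(X)=2$, the definition of $\underline{d^f}$ gives $\underline{d^f}(x)=d^f(x,y)/2=\underline{d}(x)+(f(x)+f(y))/2$, which equals $\underline{d}(x)+f(x)$ exactly when $f(x)=f(y)$, explaining the need for $f$ to be constant in this case.

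The main (albeit modest) obstacle is spotting the symmetric identity $d(x,y,z)+d(y,x,z)=d(x,y)$ that drives the upper bound $\underline{d}(x)+\underline{d}(y)\leq d(x,y)$ and hence the nonnegativity of $d^f$; once this is in hand, every remaining verification is a routine unpacking of definitions.
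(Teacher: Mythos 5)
Your proof is correct and follows essentially the same route as the paper's: the inequality $\underline{d}(x)+\underline{d}(y)\leq d(x,y)$ obtained from $d(x,y,z)+d(y,x,z)=d(x,y)$, the reduction of the triangle inequality to the pairwise-distinct case, and the identity $d^f(x,y,z)=d(x,y,z)+f(x)$ for the computation of $\underline{d^f}$ are exactly the ingredients the paper uses. The only difference is cosmetic (you expand the triangle inequality directly rather than phrasing it as $d^f(x,y,z)\geq 0$), and you spell out the two-point case for the final formula a bit more explicitly.
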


\begin{proof} We first prove that for any distinct $x,y \in X$,
\begin{equation}\label{eqsimple} {\underline d} (x) + {\underline d} (y) \leq d(x,y).  \end{equation}
If $X=\{x,y\}$, then \eqref{eqsimple} follows   from  the definition of~$\underline d$. If $\card(X)\geq 3$, we pick any $z\in X\setminus \{x,y\}$. Then
${\underline d} (x) \leq   { d(x, y ,z)} $
and  ${\underline d} (y) \leq   d(y,x ,z) $.
So,
$${\underline d} (x) + {\underline d} (y) \leq d(x,y,z) + d(y,x,z)= (d(x, y)+ d(y,x))/{2}=d(x,y).$$

We   now check that   ${d^f}:X \times X\to \RR$ is a pseudometric. Clearly,   ${d^f}$ is symmetric and,  by   definition, ${d^f}(x,x)=0$ for all $x\in X$. Formula \eqref{eqsimple} and the assumptions on~$f$ imply that for any distinct $x,y\in X$,
$${d^f}(x,y)=d(x,y)+f(x)+ f(y)\geq {\underline d} (x)  +f(x)+  {\underline d} (y) + f(y) \geq 0.$$
The triangle inequality for $d^f$ may be rewritten as $d^f(x,y,z)\geq 0$
  for any
$x,y,z\in X$.
If $x,y,z$ are pairwise distinct, then
 \begin{equation}\label{ert} d^f(x,y,z)  =  d(x,y,z) +  f(x)  . \end{equation}
 Hence $d^f(x,y,z)   \geq  {\underline d} (x) +  f(x) \geq 0$.
If $x=y$ or $x=z$, then   $d^f(x,y,z)  =0$; if $y=z$, then  $d^f(x,y,z)=d^f(x,y) \geq 0$.

The equality $\underline{d^f} = \underline d  +f $   follows   from \eqref{ert} if $\card(X)\geq 3$ and from the definitions 
if $\card(X)=2$ and $f=const$.
\end{proof}

 \subsection{Trimming}\label{Trimming}
 Given a pseudometric space $(X,d)$, we  can
apply  Lemma~\ref{le49} to  $f=-\underline d:X\to \RR$. This gives a pseudometric   in~$X$   denoted~$d^\bullet$. By definition,
 for any $x,y \in X$,
  \begin{equation}\label{dbulletdef}
    d^\bullet(x,y)=\left\{
                \begin{array}{ll}
                0 \quad {\text {if}} \,\,  x=y  , \\
                  d(x,y)- \underline d(x)- \underline d(y) \quad {\text {if}} \,\,   x \neq y.
                \end{array}
              \right.
\end{equation}
  Lemma~\ref{le49} implies that    $\underline{d^\bullet}=0$ so that      $(X, d^\bullet)$ is a trim pseudometric space. Let
  $ t(X)
= (\widetilde {X} ,  \widetilde {d^\bullet} )$  be   its metric quotient.  We say that   $t(X)$  is obtained from $(X,d)$ by \emph{trimming}. Note that the metric space  $t(X)$   may be non-trim.

By the definition of the metric quotient, $\widetilde X = X/{\sim_{d^\bullet} }$ and
  the projection $p_X:X  \to  \widetilde X  $ is  a   non-expansive surjection.  It is bijective   if and only if    ${d^\bullet}$   is a metric. In this case,  $\widetilde {X}=X$, $  \widetilde {d^\bullet}={d^\bullet}$   and   $t(X) $ is a trim metric space.


  Though it is  not used in the sequel, we     compute the equivalence relation  ${\sim_{d^\bullet} }$ in~$X$ directly  from~$d$. Note first  that if $1 \leq \card(X) \leq 3$, then   all elements of~$X$ are related by ${\sim_{d^\bullet} }$ and $t(X)=\{pt\}$ (exercise). 

  \begin{lemma}\label{leLAST}  Suppose that $\card(X) \geq 3$. Two distinct elements $x,y\in X$ are related by ${\sim_{d^\bullet} }$ if and only if
  for any distinct $x', x''\in X\setminus \{x\}$ and any   distinct $y', y''\in X\setminus \{y\}$, we have
  $$2 d(x,y) + d(  x',x'') +   d(  y',y'')  \leq d(x, x')+ d(x, x'') + d(y, y')+ d(y, y'')    .$$
  \end{lemma}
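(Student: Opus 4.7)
The plan is to reduce the lemma directly to the definitions of $d^\bullet$ and $\underline d$, using the basic inequality \eqref{eqsimple} established inside the proof of Lemma~\ref{le49}. Two distinct points $x,y\in X$ are related by $\sim_{d^\bullet}$ exactly when $d^\bullet(x,y)=0$, which by \eqref{dbulletdef} amounts to
\[ d(x,y)=\underline d(x)+\underline d(y). \]
By \eqref{eqsimple} the inequality $d(x,y)\geq \underline d(x)+\underline d(y)$ always holds, so the condition $x\sim_{d^\bullet} y$ is equivalent to the reverse inequality $d(x,y)\leq \underline d(x)+\underline d(y)$. I will prove the lemma by showing that this reverse inequality is equivalent to the displayed inequality holding for all admissible choices of $x',x'',y',y''$.

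First I would unpack what the displayed inequality says. Rearranging and dividing by~$2$, it reads
\[ d(x,y)\leq \tfrac{d(x,x')+d(x,x'')-d(x',x'')}{2} + \tfrac{d(y,y')+d(y,y'')-d(y',y'')}{2} = d(x,x',x'')+d(y,y',y''), \]
using the ternary function $d(\cdot,\cdot,\cdot)$ introduced in Section~\ref{Preliminaries}. Since $\card(X)\geq 3$, the sets $X\setminus\{x\}$ and $X\setminus\{y\}$ each contain at least two elements, so admissible pairs $(x',x'')$ and $(y',y'')$ exist and may be chosen independently of one another.

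For the forward direction, assume $x\sim_{d^\bullet} y$, hence $d(x,y)=\underline d(x)+\underline d(y)$. By definition of $\underline d$, for any admissible $x',x''$ we have $\underline d(x)\leq d(x,x',x'')$ and similarly $\underline d(y)\leq d(y,y',y'')$. Adding these gives the reformulated inequality, and hence the displayed one.

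For the converse, assume the displayed inequality for all admissible $x',x'',y',y''$. Because the two choices are independent, I may pass to the infimum on each side separately, obtaining
\[ d(x,y)\leq \inf_{x',x''} d(x,x',x'') + \inf_{y',y''} d(y,y',y'') = \underline d(x)+\underline d(y). \]
Combined with \eqref{eqsimple} this yields $d(x,y)=\underline d(x)+\underline d(y)$, i.e.\ $d^\bullet(x,y)=0$ and $x\sim_{d^\bullet} y$. There is no real obstacle here; the only point demanding a sentence of care is the legitimacy of taking the two infima independently, which is guaranteed by $\card(X)\geq 3$ and by the fact that the displayed condition is quantified universally over the two pairs separately.
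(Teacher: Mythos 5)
Your proof is correct and follows essentially the same route as the paper: both reduce $x\sim_{d^\bullet}y$ via \eqref{eqsimple} to the one-sided inequality $d(x,y)\leq\underline d(x)+\underline d(y)$, then unwind the two infima defining $\underline d(x)$ and $\underline d(y)$ to obtain the universally quantified displayed inequality. You spell out the independence-of-infima point more explicitly than the paper does, but there is no substantive difference in approach.
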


 \begin{proof}   It follows from the definitions that  $$x {\sim_{d^\bullet} } y  \Longleftrightarrow  d^\bullet (x,y)=0 \Longleftrightarrow  d(x,y)= \underline d (x)+ \underline d (y).$$ By~\eqref{eqsimple}, the latter equality holds iff
 $d(x,y) \leq \underline d (x)+ \underline d (y)$. Recall that
 $${\underline d} (x)= \inf_{x',x'' \in X\setminus \{x\}, x' \neq x'' }   d(x, x',x'') \,\,\, {\text {and}} \,\,\, {\underline d} (y)= \inf_{y',y'' \in X\setminus \{y\}, y' \neq y'' }   d(y, y',y'').$$
 Thus,
  $$x{\sim_{d^\bullet} }  y \Longleftrightarrow  d(x,y) \leq d(x, x',x'') +   d(y, y',y'') $$
  for any distinct $x', x''\in X\setminus \{x\}$ and     distinct $y', y''\in X\setminus \{y\}$. Substituting here the defining expression of the function $d:X\times X \times X\to \RR$, we obtain the claim of the lemma.
 \end{proof}

 \subsection{The trim core}\label{The transformation}\label{The trim core} Starting from a  pseudometric space~$X$ and iterating the trimming,   we  obtain   metric spaces $\{t^n(X)\}_{n\geq 1}$ and non-expansive surjections
   \begin{equation}\label{line} X =t^0(X) \stackrel{p_0}{\longrightarrow} t(X) \stackrel{p_1}{\longrightarrow} t^2(X) \stackrel{p_2}{\longrightarrow}  \cdots  \end{equation}  where     $p_n=p_{t^{n}(X)}  $ for all $n\geq 0$.   We say that~$X $ has   \emph{finite height} if    $t^n(X)$ is a  trim metric space  for some $n\geq 0$.
 The minimal such~$n$ is  the \emph{height} of~$X$ and   the corresponding metric space $c(X)=t^n(X)$  is the \emph{trim core} of~$X$.
 For a  pseudometric space $X$ of finite height $n\geq 0$, the map
  $$q_X=p_{n-1}\circ \cdots p_1 \circ p_0: X\to t^n(X)= c(X) $$
  is    a non-expansive surjection while  the maps $\{p_m \}_{m \geq n} $ are isometries. Note  that   $c(t(X))=c(X)$ and, if $n\geq 1$, then $height(t(X)) = n-1$.
Clearly,  $height(X)=0$ if and only if $X$ is a trim metric space     and then $c(X)=t(X)=X$.

These   definitions and  results    apply, in particular,   to  any finite pseudometric space~$X $.    Indeed,   for $ N=\card(X)  $,  at least one of the surjections $p_0, p_1,..., p_{N-1}$  in \eqref{line}   must be   bijective. Hence, 
$height (X )\leq N$  and   $c(X)=t^N(X)$.

 \subsection{Remarks}\label{example}   1. If a pseudometric space   $ (X,d)$ has  $\geq 3$ points, then the function ${\underline d}:X \to \RR$   can be computed by a slightly shorter formula
 $${\underline d} (x)= \inf_{y,z \in X\setminus \{x\} }    d(x, y ,z)  $$
 for any $x \in X$. 
To deduce this    from the definition of $\underline d$, it suffices to note that by \eqref{eqsimple}, we have  ${\underline d}(x) \leq d(x,y)=d(x, y ,y) $
 for all $ y\in X\setminus \{x\}$.

2.  The  maps  \eqref{line} induce a sequence of equivalence relations $\{\sim_n\}_{n\geq 1}$ in any pseudometric space $(X,d)$:   points of $  X$ are related by $\sim_n$ if their images in $t^n(X)$ are equal.  Clearly, ${\sim_{1} }={\sim_{d^\bullet} }$ and $x\sim_n y \Longrightarrow x\sim_{n+1} y$ for all $x,y \in X$.


3.    For any $X,d,f $ as in   Lemma~\ref{le49}, the pseudometric space $X^f=(X, d^f)$ satisfies $(d^f)^\bullet=d^\bullet :X \times X \to \RR$. Therefore $t(X^f)=t(X)$, i.e., the trimming turns $X$ and $X^f$ into the same metric space.  Consequently,
if $X $ has a finite height, then so does $X^f $ and
$c(X^f)=c(X )$. 

4. Consider a finite set $X$ and a  map $h:X\times X \to \RR$ such that $h(x,y)=h(y,x)$ for all $x,y\in X$.  For $r\in \RR$, we define a  map $h_r:X\times X\to \RR$   by
\begin{equation*}
    h_r (x,y)=\left\{
                \begin{array}{ll}
                0 \quad {\text {if}} \,\,  x=y  , \\
                  h(x,y)+r \quad {\text {if}} \,\,   x \neq y.
                \end{array}
              \right.
              \end{equation*}
              It is clear that  $h_r$ is a pseudometric in~$X$ for all sufficiently big~$r$.
 The  metric space $t(X,h)=t(X,h_r)$   is easily seen to be independent of the choice of~$r$.
 This defines the trimming of the pair  $(X,h)$ and allows us to define the trim core of this pair    to be the trim metric space $c(t(X,h))$.

   \section{Proof  of Theorem~\ref{th1}}\label{section3}

 \subsection{Lemmas} We begin with  two lemmas.

 \begin{lemma}\label{th3}  Any  pseudometric space $X=(X,d)$  determines for all $n\geq 1$  a   metric forest   with base  $t^n(X)$ and    leaf space~$X$.
  \end{lemma}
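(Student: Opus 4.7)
The plan is to proceed by induction on $n$, using a direct ``star'' construction at the first trimming step and then composing with a smaller forest for $t(X)$ when $n\geq 2$.

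For the base case $n=1$: the projection $p_X\colon X \to t(X) = (\widetilde{X}, \widetilde{d^\bullet})$ is surjective, so for each $\widetilde x \in \widetilde X$ the preimage $S_{\widetilde x} = p_X^{-1}(\widetilde x)$ is nonempty. Define a metric forest with base $t(X)$ whose component over $\widetilde x$ is a rooted star: root $\ast_{\widetilde x}$, one additional vertex for each $x\in S_{\widetilde x}$, and the edge from $\ast_{\widetilde x}$ to $x$ of length $\underline d(x) \geq 0$. Each star has at least one edge, hence qualifies as a rooted tree, and the total leaf set is $\amalg_{\widetilde x \in \widetilde X}\, S_{\widetilde x} = X$. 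To verify that the leaf-space pseudometric equals $d$, invoke \eqref{defdzeta}: if $p_X(x) = p_X(y) = \widetilde x$ with $x\neq y$, the in-star distance is $\underline d(x) + \underline d(y)$, which equals $d(x,y)$ by \eqref{dbulletdef} since $d^\bullet(x,y)=0$; if $p_X(x) = \widetilde x \neq \widetilde y = p_X(y)$, then \eqref{defdzeta} gives $\underline d(x) + \widetilde{d^\bullet}(\widetilde x,\widetilde y) + \underline d(y) = \underline d(x) + d^\bullet(x,y) + \underline d(y) = d(x,y)$.

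For the inductive step $n \geq 2$: apply the hypothesis to the pseudometric space $t(X)$ at level $n-1$, obtaining a metric forest $\zeta'$ with base $t^{n-1}(t(X)) = t^n(X)$ and leaf space $t(X)$. Compose $\zeta'$ with the star forest $\zeta$ above: for each $a\in t^n(X)$, take the rooted tree $(\zeta'_a, \ast_a)$ and at each leaf $\widetilde x \in \partial \zeta'_a$ attach the star $\zeta_{\widetilde x}$ by identifying its root $\ast_{\widetilde x}$ with the vertex $\widetilde x$. The resulting rooted tree at $\ast_a$ has new leaf set $\amalg_{\widetilde x \in \partial \zeta'_a}\, S_{\widetilde x}$, so taking disjoint union over $a$ produces a finite metric forest with base $t^n(X)$ and total leaf set $X$.

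The step requiring the most care is the verification of the leaf-space pseudometric of the composed forest. The key structural observation is that, because each star of $\zeta$ is attached at a leaf of $\zeta'$ through its root, the unique shortest path from a leaf $x\in S_{\widetilde x}$ of the composed tree to its root $\ast_a$ passes through $\widetilde x$ and splits as $\underline d(x) + d_{\zeta'_a}(\widetilde x, \ast_a)$. With this in hand, three cases arise: (i) $p_X(x) = p_X(y)$; (ii) $x, y$ lie in different attached stars of the same component of $\zeta'$; (iii) $x, y$ lie in different components of the composed forest. In each case, applying \eqref{defdzeta} (to $\zeta'$ in case (ii) and to the composed forest in case (iii)) and telescoping reduces the computation to $d(x,y) = \underline d(x) + d^\bullet(x,y) + \underline d(y)$ combined with the inductive assumption that the leaf-space pseudometric of $\zeta'$ is $\widetilde{d^\bullet}$.
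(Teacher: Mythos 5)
Your proof is correct, but it is organized quite differently from the paper's. The paper builds the whole forest at once: vertices are $\amalg_{i=0}^{n} X_i$ where $X_i$ is the underlying set of $t^i(X)$, each $v \in X_i$ ($i\le n-1$) is joined to $p_i(v)\in X_{i+1}$ by an edge of length $\underline{d_i}(v)$, and the pseudometric is verified by a direct telescoping computation using $\underline{d_i}(x_i)+\underline{d_i}(y_i)=d_i(x_i,y_i)-d_{i+1}(x_{i+1},y_{i+1})$. You instead isolate the single trimming step as a ``star forest'' (your base case), prove the claim there, and then get the general $n$ by grafting that star forest onto a forest for $t(X)$ furnished by the inductive hypothesis. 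The grafting step is exactly the transitivity construction the paper uses later in the proof of Theorem~\ref{thh2}, so you are not using anything beyond the paper's toolkit; you are just proving transitivity of $\ge$ earlier and then applying it. Both arguments produce the same forest, so the two proofs differ only in how the verification is packaged: the paper telescopes explicitly, you telescope implicitly through the induction. Your route has the advantage of making the one-step picture (preimages of $p_X$ as stars with edge lengths $\underline d$) very transparent, and of exhibiting $\ge$ as transitive on the way. One small inaccuracy: in the inductive step you call the resulting forest ``finite,'' but the lemma is stated for arbitrary pseudometric spaces $X$, and if $X$ is infinite the forest need not be finite; just drop the word. Also, to make the case analysis airtight, you should state explicitly that every leaf $x$ of the composed tree has $\widetilde x=p_X(x)$ as its unique neighbor, which is what lets you split $d_{\mu_a}(x,\ast_a)$ as $\underline d(x)+d_{\zeta'_a}(\widetilde x,\ast_a)$; you gesture at this with ``unique shortest path,'' and it is indeed forced by the tree structure, so this is a presentation point rather than a gap.
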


 \begin{proof} 
Set $X_0=X$ and   $d_0=d$.
For  $i\geq 1$, let $X_i$ and $d_i:X_i\times X_i \to \RR $ be the underlying set and the metric of the metric space $t^i(X)$. For all $i\geq 0$, consider
  the   function $\underline{d_i} :X_i \to \RR $   and the map $p_i=p_{t^i(X)}: X_i \to X_{i+1}$.
  We  fix $n\geq 1$ and  form a graph  whose  set of  vertices  is  the disjoint union $\amalg_{i=0}^{n} X_i$. Each    $v\in X_i$ with   $i \leq n-1$ is connected  to   $p_i(v)\in X_{i+1}$ by an edge $e_v$ of length $\underline{d_i} (v)\geq 0$. It is clear that the resulting graph, $\zeta$,    is a disjoint union of trees, and each of these trees has precisely one vertex belonging to  $X_{n}$. We take this vertex as the root. Then  every point   of $ X_{n}$ is the root of a unique tree component   of~$\zeta$. Thus,~$\zeta$ is a metric forest with base $X_{n} $. Note that the vertices of $\zeta$ lying in $X_0=X$ have degree $1$.
 The surjectivity of    $p_0, p_1,..., p_{n-2}$ implies that the vertices of~$\zeta$ lying in $X_1\amalg X_2 \amalg  \cdots \amalg X_{n-1}$ have degree $\geq 2$. Thus,   $\partial \zeta= X$.
 We need only  to show that the  path pseudometric $d_\zeta$ in $\partial \zeta $   coincides with the original metric~$d$ in~$X$.

 Observe   that any    $x\in X $ determines a sequence of  points   $\{x_i \in X_i\}_{i=0}^{n}$     by  $x_0=x$ and $x_{i+1}=p_i(x_i)$ for   $i\leq n-1$.  Clearly,~$x$ is a degree 1 vertex of the tree $\zeta_{x_{n}}$ with root $x_{n}$.
We   pick   any    $x,y\in X$ and prove that $d_\zeta(x,y)= d(x,y)$. It suffices to handle the case $x \neq y$. Suppose first that $x_{n}= y_{n}$ so that   $x$ and  $y$ are vertices of the same tree  $\mu=\zeta_{x_{n}}$.
   Let $1 \leq m \leq n $ be the smallest   index such that $x_{m}=y_{m}$.
The shortest path from $x$ to $y$ in $\mu$ is formed by the   edges $$ e_{x_0 }, e_{x_1 },...,  e_{x_{m-1} }, e_{y_{m-1}},  ..., e_{y_{1}}, e_{y_{0}}.$$
  By~\eqref{defdzeta}  and the definition of the path pseudometric $d_\mu$ in~$\mu $,
  \begin{equation}\label{almot} d_\zeta(x,y)=d_{\mu} (x,  y) =\sum_{i=0}^{m-1} (\underline{d_i} (x_i )+ \underline{d_i} (y_i )) .\end{equation}
 By the definition of the metric $d_{i+1}$ in $X_{i+1}$, for all  $i=0,1,...,m-1$, we have
 \begin{equation}\label{almotr} \underline{d_i} (x_i ) +\underline{d_i} (y_i )=d_{i}(x_{i},y_{i})- d_{i+1}(x_{i+1},y_{i+1}) .\end{equation}
   Substituting these expressions in \eqref{almot}, we obtain
  $$d_\zeta(x,y)=d_{0}(x_{0},y_{0})- d_{m}(x_{m},y_{m})=d_0(x_0,y_0)=d(x,y).$$

  Suppose now that $x_{n}\neq y_{n}$ so that   $x$, $y$ lie in different trees $\mu=\zeta_{x_{n}}$, $\eta=\zeta_{y_{n}}$ with roots respectively $x_{n}$ and $y_{n}$.
The shortest path from $x$ to $x_{n}$ in $\mu$ is formed by the   edges $\{ e_{x_i }\}_{i=0}^{n-1}$ and similarly for~$y$.
Hence,
$$ d_{\mu} (x,  x_{n})=\sum_{i=0}^{n-1} \underline{d_i} (x_i ) \quad {\rm {and}} \quad  d_{\eta} (y,  y_{n})=\sum_{i=0}^{n-1} \underline{d_i} (y_i ) $$
where    $d_{\mu}, d_{\eta}$ are the path pseudometrics in    $\mu, \eta$, respectively.
By~\eqref{defdzeta},
$$
    d_\zeta(x,y)=
                  d_{\mu} (x,  x_{n}) + d_{n}(x_{n}, y_{n}) + d_{\eta} ( y, y_{n} )
$$
$$  =d_{n}(x_{n}, y_{n})+
                 \sum_{i=0}^{n-1} (\underline{d_i} (x_i ) +\underline{d_i} (y_i ))  .$$
Substituting     \eqref{almotr}   for  all $i$, we obtain
  $d_\zeta(x,y)=d_0 (x_0, y_0)= d(x,y)$.
\end{proof}

   \begin{lemma}\label{th8} Any  pseudometric space $X$ of finite height determines (in a canonical way)  a  metric forest   with base  $c(X)$ and   leaf space~$X$.
  \end{lemma}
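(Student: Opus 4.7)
My plan is to derive Lemma~\ref{th8} directly from Lemma~\ref{th3}. By hypothesis, $X$ has finite height $n\geq 0$, so by the very definition of height we have $c(X)=t^n(X)$. If $n\geq 1$, I simply invoke Lemma~\ref{th3} with this value of $n$: the resulting metric forest has base $t^n(X)=c(X)$ and leaf space~$X$, which is exactly the object required by the lemma.

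The only small subtlety is the boundary case $n=0$, in which $X$ is already a trim metric space. In that situation $\underline d \equiv 0$, so $d^\bullet = d$; since $X$ is moreover a metric space, the equivalence relation $\sim_{d^\bullet}$ is trivial and consequently $t(X)=X=c(X)$. Hence we still have $c(X)=t^1(X)$, and applying Lemma~\ref{th3} with $n=1$ again produces the required metric forest. Its tree components are single edges of length $\underline d(x)=0$ joining each $x\in X\subset X_0$ to its image in $t(X)=X_1$, which is precisely the trivial forest construction mentioned after~\eqref{defdzeta} in the introduction.

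Canonicity is transparent: the proof of Lemma~\ref{th3} builds the forest entirely from the iterated trimming $\{t^i(X)\}$ and the canonical surjections $p_i$ between them, all of which depend on~$X$ alone; combined with the canonical choice $n=\max(\mathrm{height}(X),1)$, this makes the whole construction depend functorially on~$X$. There is essentially no obstacle here: the content of Lemma~\ref{th8} is captured by Lemma~\ref{th3} once one observes that $t^n(X)$ coincides with $c(X)$ as soon as $n$ meets or exceeds the height of~$X$, and that $t(X)=X$ whenever $X$ is already a trim metric space.
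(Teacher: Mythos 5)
Your proof is correct and essentially identical to the paper's: both reduce to Lemma~\ref{th3} when $n\geq 1$, and both handle $n=0$ via the trivial forest $\{[0,1]_a\}_{a\in X}$. The only cosmetic difference is that you derive the $n=0$ case by running Lemma~\ref{th3} with $n=1$ and observing (correctly, since $t(X)=X$ for trim $X$) that this reproduces that trivial forest, whereas the paper cites the introductory construction directly.
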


 \begin{proof} Set $n=height(X) \geq 0$. If   $n=0$,   then  $c(X)=X$ and the   metric forest $ \{[0,1]_a\}_{a\in X}$ described in the introduction satisfies the conditions of the lemma. If     $n \geq 1$, then this lemma is a special case of Lemma~\ref{th3}.  \end{proof}

 \subsection{Proof  of Theorem \ref{th1}.}     Theorem \ref{th1} is a direct consequence of Lemma \ref{th8} since all finite metric spaces have finite height.

  \section{Proof  of Theorem~\ref{thtree}}\label{section3+}

  \subsection{Lemmas}  Given a (non-rooted) tree $\tau$, we let $\tau^\bullet$ be the tree  obtained from~$\tau$  by changing to zero the lengths  of all edges adjacent to degree~1 vertices   while keeping the lengths of all other edges. 

   \begin{lemma}\label{cloleththree} Let $\tau$ be a finite non-rooted tree  with leaf space $(X, d=d_\tau\vert_X)$. If~$\tau$ has no vertices of degree 2 and $\card (X)\geq 3$, then
   the   pseudometric   $  d^\bullet $ in~$X$ defined by \eqref{dbulletdef} coincides with   the  path pseudometric of the tree~$\tau^\bullet$.
  \end{lemma}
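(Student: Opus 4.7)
The plan is to reduce the lemma to the single identity
\[
\underline{d}(x) = \ell(e_x) \qquad \text{for every } x \in X,
\]
where $e_x$ denotes the unique edge of $\tau$ incident to the leaf $x$ and $\ell(e_x)$ is its length. Once this is known, the definition \eqref{dbulletdef} immediately yields $d^\bullet(x,y) = d_\tau(x,y) - \ell(e_x) - \ell(e_y)$ for distinct $x,y \in X$, and this is precisely $d_{\tau^\bullet}(x,y)$: the unique path from $x$ to $y$ in $\tau$ begins with $e_x$, ends with $e_y$, and in between uses only edges whose lengths are unchanged when passing from $\tau$ to $\tau^\bullet$.

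To establish the identity I use the standard geometric reading of the Menger three-point function on a tree: for any three vertices $x,y,z$ the Steiner median $m = m(x,y,z)$ (the unique vertex at which the three pairwise geodesics in $\tau$ meet) satisfies $d_\tau(x,y,z) = d_\tau(x,m)$. Since $x$ is a leaf of $\tau$, every geodesic from $x$ starts with $e_x$ and passes through the unique neighbor $v$ of $x$; hence $m$ lies at $v$ or beyond, and $d(x,y,z) = d_\tau(x,m) \geq \ell(e_x)$. Taking the infimum over $y,z \in X \setminus\{x\}$ gives $\underline{d}(x) \geq \ell(e_x)$.

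For the reverse inequality I must exhibit distinct $y,z \in X \setminus \{x\}$ lying in two different branches of $\tau$ at $v$, which forces $m(x,y,z) = v$ and hence $d(x,y,z) = \ell(e_x)$. Since $\card(X) \geq 3$ the vertex $v$ is not itself a leaf of $\tau$, and the no-degree-$2$ hypothesis then forces $\deg_\tau(v) \geq 3$. For each neighbor $w \neq x$ of $v$, let $T_w$ be the component of $\tau$ with the edge $vw$ removed that contains $w$; I claim $T_w \cap X \neq \emptyset$. Indeed, if $T_w = \{w\}$ then $w$ has $\tau$-degree $1$, so $w \in X$; otherwise $T_w$ is a finite tree with at least two vertices, hence has at least two leaves (in the sense of $T_w$), and any leaf of $T_w$ distinct from $w$ automatically has degree $1$ in $\tau$ and so lies in $X$. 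Picking $y,z$ from two distinct such branches completes the proof.

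The main delicacy is the step in the preceding paragraph showing each branch $T_w$ meets $X$. Without the no-degree-$2$ hypothesis, $w$ itself could be a leaf of $T_w$ but have degree $2$ in $\tau$, and for instance a branch consisting of a single pendant edge terminating at such a $w$ would fail to contribute any point of $X$; this is the one place in the argument where that hypothesis is genuinely needed.
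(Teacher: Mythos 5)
Your proof is correct and follows essentially the same route as the paper's: compute $\underline d(x)=\ell(e_x)$ via the Steiner median (the vertex where the three geodesics from $x,y,z$ meet), use $\deg_\tau(v)\geq 3$ to exhibit a pair $y,z$ attaining the infimum, and then read off $d^\bullet=d_{\tau^\bullet}$ from \eqref{dbulletdef}. The only difference is that you spell out in detail why each branch at $v$ meets $X$, a step the paper states in one line ("such $y,z$ do exist because the second endpoint of $e_x$ has degree $\geq 3$").
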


 \begin{proof} We  first compute the function $\underline d:X\to \RR$ determined by the pseudometric $d=d_\tau$ in $X$.     Pick any    $x\in X $ and let $ e_x $ be the   unique edge of~$\tau$ adjacent to~$x$. Let $L\geq 0$ be the length of~$e_x$. We claim that    $ \underline{d}(x)=L$.  To see it, we compute  $d (x,y,z)  \in \RR$ for any     $y,z \in X\setminus \{x\}$. Since $\tau$ is a tree,  there   are injective paths $p_x, p_y, p_z$ in $ \tau$ leading respectively from $x,y,z$ to a vertex~$v$ of   $\tau$
   and meeting solely in~$v$.
   Denoting by $\ell(p)$ the length of a path $p$, we obtain
    $$d(x, y)=d_{\tau}(x, y)= \ell(p_x)+\ell(p_y),$$
    $$d(x, z)=d_{\tau}(x, z)= \ell(p_x)+\ell(p_z),$$
    $$d(y,z)=d_{\tau}(y,z)= \ell(p_y)+\ell(p_z).$$
   Consequently,  $$ d(x,y,z)=  \frac{ d(x, y)+ d(x, z)-d(y,z)}{2} =\ell(p_x) .$$
  Since  the path $p_x$ has to traverse the   edge $e_x$, we have $\ell(p_x) \geq L$. Thus, $d (x,y,z) \geq L$.
  To prove that $ \underline{d}(x)=L$, it remains   to find   distinct   $y,z\in X \setminus \{x\}$ such that the   path $p_x$   above consists only of the edge $e_x $ so that $d(x,y,z)=\ell(p_x)=L$. Such   $y,z$ do exist because the  second endpoint of  $e_x $  has degree   $\geq 3$. The latter holds because~$\tau$ has no vertices of degree 2 and $\card(X) \geq 3$.

    The computation of~$\underline d$ above and  the definition of     $  d^\bullet$     imply that  $d^\bullet$ is the  path pseudometric of~$\tau^\bullet$.
 \end{proof}

  \begin{lemma}\label{leththree} For any finite  pseudometric space~$X$, the following three conditions are equivalent:

  (i) $c(X)=\{pt\}$;

    (ii) $X $ is   the leaf space of a finite rooted tree;

    (iii) $\card(X)=1$ or $X$   is   the leaf space of a     finite   non-rooted  tree.
  \end{lemma}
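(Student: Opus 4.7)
I prove $(i)\Rightarrow(ii)\Rightarrow(iii)\Rightarrow(i)$.

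For $(i)\Rightarrow(ii)$, Lemma~\ref{th8} supplies a finite metric forest with leaf space $X$ and base $c(X)=\{pt\}$; a metric forest whose base is a single point has exactly one component, which is the desired finite rooted tree.

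For $(ii)\Rightarrow(iii)$, the case $\card(X)\le 1$ is immediate, so assume $\card(X)\ge 2$. If $\ast$ has degree $\ge 2$ in $\tau$, then $\tau$ viewed as a non-rooted tree has leaf set $X$. If $\ast$ has degree $1$, walk along the unique pendant path from $\ast$ until reaching the first vertex $u$ of $\tau$ of degree $\ge 3$ (such a $u$ exists, for otherwise $\tau$ is a path and $\card(X)=1$); deleting this pendant from $\ast$ up to but not including $u$ produces a non-rooted tree in which $u$ has degree $\ge 2$, whose leaf set is $X$, and on which the path pseudometric on $X$ is unchanged, since no path between points of $X$ enters the deleted pendant.

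The substantial part is $(iii)\Rightarrow(i)$. I would prove the stronger statement that $c(Y)=\{pt\}$ for every finite pseudometric space $(Y,d)$ isometric to a subset of $(V(\tau),d_\tau)$ for some finite tree $\tau$ (equivalently, $d$ satisfies the four-point condition), by strong induction on $\card(Y)$. The base $\card(Y)\le 2$ is immediate. For $\card(Y)\ge 3$ it suffices to show that $t(Y)$ still satisfies the four-point condition and that $\card(t(Y))<\card(Y)$; the induction hypothesis then gives $c(Y)=c(t(Y))=\{pt\}$. The first claim is a direct algebraic check: for any four points of $Y$, the three pairings $d^\bullet(x,y)+d^\bullet(z,w)$, $d^\bullet(x,z)+d^\bullet(y,w)$, $d^\bullet(x,w)+d^\bullet(y,z)$ differ from their $d$-analogues by the same constant $-\underline d(x)-\underline d(y)-\underline d(z)-\underline d(w)$, so the \emph{two largest are equal} condition transfers, and it survives passage to the metric quotient.

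The real work is producing a pair identified under $\sim_{d^\bullet}$. I would work with a \emph{minimal} tree realisation $\tau$ of $(Y,d)$ (every vertex of $V(\tau)\setminus Y$ has degree $\ge 3$, so every leaf of $\tau$ lies in $Y$) and examine a leaf $x$ of $\tau$ with neighbour $v$. If $v\in Y$, direct computation gives $\underline d(x)$ equal to the length of $xv$ and $\underline d(v)=0$ (using that $v$ has at least two branches meeting $Y$), so $d^\bullet(x,v)=0$. If $v\notin Y$ and $v$ is adjacent to a second leaf $x'$ of $\tau$, the cherry identity $d(x,x')=\underline d(x)+\underline d(x')$ yields $d^\bullet(x,x')=0$. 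Otherwise every leaf of $\tau$ is adjacent to a non-$Y$ vertex having only it as a leaf neighbour; I would rule this out by pruning all leaves of $\tau$ and observing that every vertex in the pruned tree retains degree $\ge 2$ (each such non-$Y$ neighbour drops from $\ge 3$ to $\ge 2$, other internal vertices are unchanged), so the pruned tree has no leaves of its own, which for a nonempty finite tree forces at most one vertex and hence $\card(Y)\le 2$, a contradiction. Excluding this last subcase is the main technical obstacle of the proof.
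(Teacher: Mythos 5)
Your proofs of $(i)\Rightarrow(ii)$ and $(ii)\Rightarrow(iii)$ match the paper's in substance; walking the whole pendant path to the first vertex of degree~$\ge 3$ in one step is just a compressed version of the paper's edge-by-edge induction.

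Your proof of $(iii)\Rightarrow(i)$, however, is a genuinely different route, and it is correct. The paper stays entirely inside the world of leaf spaces: after normalising $\tau$ to have no degree-2 vertices, Lemma~\ref{cloleththree} identifies $d^\bullet$ as the path pseudometric of the surgered tree $\tau^\bullet$, so that $t(X)$ is again manifestly the leaf space of a smaller tree (obtained by collapsing cherries); the nontriviality of $\sim_{d^\bullet}$ is then a single combinatorial fact about trees without degree-2 vertices (essentially Lemma~\ref{lemma54}). You instead prove the stronger claim that $c(Y)=\{pt\}$ whenever $Y$ embeds isometrically in the vertex set of a finite tree, and you run the induction through the four-point condition: you show the condition is preserved by trimming (for four \emph{distinct} points your ``same additive constant'' argument is exactly right; for coincident points the condition degenerates to a triangle inequality, which $d^\bullet$ satisfies as a pseudometric — worth a sentence), and you find the collapsing pair by analysing a minimal tree realisation, with a pruning argument ruling out the bad case. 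The gain is generality (your statement covers arbitrary subsets of a tree's vertices, not just its leaves, and your Case 1 handles a configuration that never occurs in the paper's setup) and a cleaner invariant (``satisfies the four-point condition'') that is visibly inherited by $t(Y)$. The cost is that you invoke the Buneman--Zaretsky--Sim\~oes-Pereira theorem that the four-point condition is \emph{equivalent} to tree realisability — specifically the hard direction, to convert ``$t(Y)$ satisfies the four-point condition'' into a tree realisation of $t(Y)$ for the inductive step. The paper carefully avoids that external theorem (it is mentioned only in discarded appendix material), which is presumably why it takes the more hands-on route. If you want a fully self-contained proof you would need to replace the appeal to that equivalence by showing directly, as the paper does, that $t(Y)$ is again realised in a (smaller) tree obtained from $\tau$ by surgery.
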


 \begin{proof} The implication  (i) $\Longrightarrow$ (ii)    follows from Theorem~\ref{th1}. To prove the  implication
   (ii) $\Longrightarrow$ (iii), suppose that~$X $ is   the leaf space of a finite rooted tree~$ \tau $ with root~$\ast$.  If
 the degree of~$\ast$ is   $\neq 1$,   then the   leaf spaces of~$\tau$ viewed   as a rooted tree and a non-rooted tree coincide and we get (iii). If~$\ast$
 has degree~1, then~$\ast$ is adjacent to a unique edge~$e$   connecting $\ast$ to another vertex, $v$,   of~$\tau$. If~$v$ has degree 1, then~$\tau$ has no edges other than $e$ and $X=\{v\}$, so that $\card(X)=1$. If~$v$ has degree $\geq 2$, then~$X$  is the leaf space of the rooted tree  obtained from~$\tau$ by deleting~$\ast$ and~$e$  and taking~$v$ for the   root. This new rooted tree has less edges than~$\tau$ and the  implication
   (ii) $\Longrightarrow$ (iii)  follows by induction.

 We now prove the implication   (iii) $\Longrightarrow$ (i). If $\card (X)=1$, then $c(X)=X=\{pt\}$.
 Suppose now that  $\card (X)\geq 2$ and~$X $  is the leaf space
 of a  finite  non-rooted  tree~$\tau$.
 If $\tau$  has a vertex of degree 2, then eliminating this vertex and uniting its   adjacent edges into a single edge (the lengths add  up)  we obtain a new   tree with the same leaf space. Similarly, if $\tau$  has an edge of length zero with both endpoints of degree $\neq 1$, then contracting this edge,  we obtain  a new   tree with the same leaf space.
 Thus, without loss of generality we can assume that
  $\tau$ has no vertices of degree 2 and
 the length of each edge of~$\tau$ with both endpoints of degree $\neq 1$ is positive.
We prove that $c(X)=\{pt\}$   by induction on $\card (X)$. If $\card (X)= 2$, then  $c(X)=t(X)=\{pt\}$. If $\card (X)\geq 3$, then by Lemma~\ref{cloleththree}, the   pseudometric   $  d^\bullet  $ in~$X$ coincides with the  path pseudometric of the tree~$\tau^\bullet$.  The assumptions on~$\tau$ imply that the equivalence relation $\sim_{d^\bullet}$ in~$ X$ relates precisely  those degree~1 vertices of $\tau$   whose adjacent edges share the   second endpoint. This equivalence relation is non-trivial:  an easy induction on the number of edges shows that a   finite tree  having at least three vertices  and no   vertices of degree~2   must have distinct   degree~1 vertices whose adjacent edges share an endpoint (cf.\ Lemma~\ref{lemma54} below).  Hence the   set $\widetilde {X} = X/\sim_{d^\bullet}$ has   less elements than~$X$. 
  The  pseudometric space $t(X) =(\widetilde {X} , \widetilde {d^\bullet} )$   is the leaf space of the tree  obtained from $\tau^\bullet$ by   deleting all but one   vertices (and the adjacent  edges) in each equivalence class in $ X$.    By the definition of the  trim core and  the induction assumption, $c(X)=c(t(X)) =\{pt\}$.
 \end{proof}

 \subsection{Proof  of   Theorem \ref{thtree}.} Theorem \ref{thtree}   follows from the equivalence of the conditions (i) and (iii) in Lemma~\ref{leththree}.

  \section{Proof  of Theorems \ref{corol} and~\ref{thh2}}\label{Proof  of Theorem}

  \subsection{Conventions and lemmas.}  Throughout Section~\ref{Proof  of Theorem}  all trees and   forests are assumed to be finite.   Every leaf~$x$ of a rooted tree~$\tau$ is adjacent to a single edge   of~$\tau$ whose other vertex is called the \emph{neighbor} of~$x$  (the neighbor   may  happen to be the root of~$\tau$ but cannot be a leaf of~$\tau$). Two leaves of a rooted tree are \emph{contiguous} is they have the same neighbor.
   We   say that a rooted   tree  is \emph{reduced} if
it has no vertices of degree~2 except possibly the root. The number  of edges of a  tree~$\tau$ is denoted by $ \vert \tau \vert$.

  \begin{lemma}\label{lemma54} Any reduced rooted tree~$\tau$ with $ \vert \tau \vert \geq 2$  has at least one pair of contiguous leaves.
  \end{lemma}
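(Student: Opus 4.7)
My plan is to pick a leaf at maximum depth and argue that its neighbor must have another leaf attached to it. Concretely, fix a leaf $x \in \partial\tau$ whose distance (measured in number of edges) from the root $\ast$ is maximal among all leaves, and let $v$ denote its neighbor. I would then split into two cases according to whether $v=\ast$ or not.

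If $v\neq\ast$, then the hypothesis that $\tau$ is reduced forces $v$ to have degree $\geq 3$. Exactly one edge at $v$ leads toward $\ast$, so $v$ has at least two ``descendant'' neighbors, one of which is $x$; call another such neighbor $y$. I would show $y$ must itself be a leaf: otherwise the subtree hanging from $y$ contains a leaf $z$ strictly farther from $\ast$ than $x$, contradicting the maximality of $x$. Then $x$ and $y$ share the neighbor $v$ and are contiguous, as required.

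If $v=\ast$, then the maximality of $x$'s depth means that every leaf is adjacent to $\ast$; equivalently, every neighbor of $\ast$ is a leaf, since any non-leaf child would produce a strictly deeper leaf. Because $|\tau|\geq 2$, the root has at least two edges incident to it, hence at least two leaf neighbors, and any two of these form a contiguous pair.

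The argument is clean, and I do not expect a serious obstacle; the one subtlety is the asymmetric role of the root in the definition of ``reduced''. Since the root is permitted to have degree $2$, one cannot blindly transcribe the $v\neq \ast$ argument to the case $v=\ast$, which is why I handle that case separately using the hypothesis $|\tau|\geq 2$ together with the maximality of $x$'s depth.
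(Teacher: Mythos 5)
Your proof is correct, but it takes a genuinely different route from the paper's. The paper argues by induction on the number of edges $|\tau|$: in the base case $|\tau|=2$ the middle vertex must be the root (being of degree $2$) and the other two vertices are contiguous leaves; in the inductive step it splits on $\deg(\ast)$, either deleting a degree-$1$ root and its edge, or decomposing $\tau$ at $\ast$ into subtrees and applying the induction hypothesis to one of them. You instead give a direct extremal argument: pick a leaf $x$ at maximum depth, let $v$ be its neighbor, and use reducedness to produce a sibling leaf of $x$ at $v$ when $v\neq\ast$, handling $v=\ast$ by observing that maximality then forces every neighbor of $\ast$ to be a leaf and $|\tau|\geq 2$ gives at least two of them. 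Your route avoids induction entirely and makes the role of the asymmetric "reduced" hypothesis at the root explicit, which is arguably cleaner; the paper's inductive decomposition at the root is more in the spirit of the recursive tree manipulations used elsewhere in that section (e.g.\ in the proofs of Lemmas~\ref{leththree} and~\ref{lemma42e}), so it reads as more of a piece with its surroundings. Both are complete; your case analysis at $v=\ast$ is correct precisely because reducedness exempts the root from the degree-$\neq 2$ requirement, and you rightly flag this as the one delicate point.
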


  \begin{proof}    We proceed by induction on   $ \vert \tau \vert  $.
   If $\vert \tau \vert=2$, then $\tau$ has a vertex of degree~2 which has to be   the root because~$\tau$ is reduced; the other two vertices of~$\tau$ are contiguous leaves. The induction step goes as follows. Suppose that $\vert \tau \vert \geq 3$ and let~$\ast$ be the root of~$\tau$. If~$\ast$  has degree~$1$, then eliminating $\ast$ and the   adjacent edge, and taking the second vertex of this edge as the new root, we obtain a reduced rooted tree~$\tau'$ with $\vert \tau' \vert=\vert \tau \vert-1 \geq 2 $.   By the induction assumption, $\tau'$ has   contiguous leaves. The same leaves are contiguous in~$\tau$.
  If~$\ast$   has degree $m\geq 2$, then~$\tau$ is a union of~$m$ reduced rooted trees $\{\tau_i\}_{i=1}^m$ meeting   in the  common root~$\ast$. Clearly,  $\vert \tau_i \vert <\vert \tau \vert$ for all~$i$ and either $\vert \tau_i \vert =1$ for all~$i$ or there is an $i  $ such that  $\vert \tau_i \vert \geq 2$.  In the former case, the  vertices of  $\{\tau_i\}_{i=1}^m$ distinct from~$\ast$ are contiguous leaves of~$\tau$. If $\vert \tau_i \vert \geq 2$,  then by the induction assumption, $\tau_i$ has      contiguous leaves. The same leaves are contiguous in~$\tau$.
  \end{proof}
  
  The following claim is a version of Lemma~\ref{cloleththree} for metric forests. 

   \begin{lemma}\label{lemma42enm}\label{lemma43enm} Let  $ \zeta$ be   a   metric forest  whose components are reduced rooted trees and whose  base $B=(B,d_B)$ is a trim metric space with $\geq 2$ points. 
    Consider the leaf space $(\partial \zeta, d=d_\zeta)$ of $\zeta$.  Then the pseudometric space $(\partial \zeta, d^\bullet) $ is the leaf space of  the metric forest $\zeta^\bullet$  obtained from $\zeta$
   by changing to zero the lengths  of all edges adjacent to   leaves (and keeping the lengths of all the other edges).  
  \end{lemma}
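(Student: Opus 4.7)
The plan is to prove that $\underline{d}(x) = L_x$ for every leaf $x \in \partial \zeta$, where $L_x$ denotes the length of the unique edge $e_x$ of $\zeta$ adjacent to $x$. Once this is established, substituting into \eqref{dbulletdef} yields $d^\bullet(x, y) = d(x, y) - L_x - L_y$ for any distinct $x, y \in \partial \zeta$; a short case split (whether $x, y$ lie in the same component of $\zeta$ or not) combined with \eqref{defdzeta} shows this coincides with the path pseudometric of $\zeta^\bullet$, because $\zeta^\bullet$ differs from $\zeta$ only by zeroing the lengths of the two edges adjacent to $x$ and $y$.

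Following the idea of Lemma~\ref{cloleththree}, I first extend $d$ to a pseudometric on the set of all vertices of $\zeta$ via the natural analogue of \eqref{defdzeta}. For a leaf $x \in \partial \zeta_a$ with neighbor $v_x$, the degree-$1$ condition at $x$ forces every path from $x$ in $\zeta$ to begin with the edge $e_x$, so $d(x, u) = L_x + d(v_x, u)$ for every other vertex $u$. Consequently, for any distinct $y, z \in \partial \zeta \setminus \{x\}$,
$$d(x, y, z) = L_x + \frac{d(v_x, y) + d(v_x, z) - d(y, z)}{2} \geq L_x,$$
with equality if and only if $v_x$ lies between $y$ and $z$ in the extended pseudometric. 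This already gives $\underline{d}(x) \geq L_x$, and the proof reduces to exhibiting distinct leaves $y, z \in \partial \zeta \setminus \{x\}$ with $v_x$ between them.

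I produce such $y, z$ by case analysis on whether $v_x$ equals the root $\ast_a$. If $v_x \neq \ast_a$, the reducedness of $\zeta_a$ forces $v_x$ to have degree $k \geq 3$; removing $v_x$ from $\zeta_a$ yields $k$ subtrees, one being $\{x\}$, one (call it $T_1$) containing $\ast_a$, and $k - 2$ others. A short combinatorial check using reducedness shows that each subtree not containing the root contains a leaf of $\zeta_a$, while $T_1$ contains a leaf of $\zeta_a$ unless $T_1 = \{\ast_a\}$. If two distinct non-$\{x\}$ subtrees contain leaves of $\zeta_a$, take one such leaf from each as $y, z$; otherwise $k = 3$ and $T_1 = \{\ast_a\}$, in which case take $y$ to be a leaf of the unique non-root subtree and take $z$ to be any leaf of $\zeta_b$ for some $b \in B \setminus \{a\}$ (which exists since rooted trees have at least one edge). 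If instead $v_x = \ast_a$, invoke the trim property of $B$ to obtain distinct $b, c \in B \setminus \{a\}$ with $a$ lying between them, and take any leaves $y \in \partial \zeta_b, z \in \partial \zeta_c$; a direct computation from \eqref{defdzeta} then confirms that $\ast_a$ lies between $y$ and $z$.

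The main obstacle is the combinatorial case analysis in the previous paragraph, especially the exceptional configuration where $v_x$ has minimal degree $3$ and the root-containing subtree of $\zeta_a \setminus \{v_x\}$ reduces to $\{\ast_a\}$; handling this relies on the existence of a second component of $\zeta$, guaranteed by $\card(B) \geq 2$. The trim hypothesis on $B$ is invoked only in the case $v_x = \ast_a$, where it ensures enough points of $B$ arranged with $a$ sitting between two of them. Everything else is a direct substitution into \eqref{defdzeta}.
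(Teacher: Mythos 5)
Your proof is correct and follows the same overall strategy as the paper: reduce the claim to computing $\underline d(x)=L_x$ for each leaf $x$, by establishing the lower bound $d(x,y,z)\geq L_x$ and then exhibiting a pair $(y,z)$ attaining it. The two proofs diverge in how they carry out each half. For the lower bound, the paper runs through five explicit cases according to whether $y$ and $z$ lie in $\partial\zeta_a$, in a common second component, or in two further distinct components; you instead extend $d_\zeta$ to a pseudometric $\hat d$ on all vertices of $\zeta$ via the analogue of \eqref{defdzeta}, note that $d(x,u)=L_x+\hat d(v_x,u)$ because every path out of a leaf must traverse $e_x$, and finish with one application of the triangle inequality for $\hat d$. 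This is a genuine streamlining of the presentation, though you do take for granted (without spelling out) that the extension is in fact a pseudometric — verifying that triangle inequality would reintroduce essentially the paper's case split, so the total work is comparable. For the equality part the situation is reversed: the paper's argument in the $v_x\neq\ast_a$ case is leaner — it always takes $y$ to be a leaf of $\zeta_a$ lying in a subtree of $\zeta_a\setminus\{v_x\}$ other than $\{x\}$ and the root's subtree (which exists since $\deg v_x\geq 3$), and $z$ in any other component of $\zeta$, then applies its Case~2 computation — whereas you distinguish a subcase according to whether two distinct non-root subtrees of $\zeta_a\setminus\{v_x\}$ carry leaves, obtaining the exceptional configuration $k=3$, $T_1=\{\ast_a\}$. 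Your subcase analysis is correct but unnecessary; the paper's uniform choice of $z$ in a second component handles both of your subcases at once. Both arguments use the trim hypothesis on $B$ and $\card(B)\geq 2$ in precisely the same place (the case $v_x=\ast_a$, and the existence of a second component), and both conclude by the same one-line observation that $d^\bullet$ equals the path pseudometric of $\zeta^\bullet$.
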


    \begin{proof} We   compute the function $\underline{d}: \partial \zeta \to \RR $. Pick any $x\in  {\partial \zeta_a } \subset \partial \zeta $ where $a\in B$.   Let $e_x$ be the unique edge   of   $\zeta_{a}$ adjacent to~$x$. Let~$L$ be the length  of   $e_x$. We claim that $\underline d(x)=L$.  We first show
   that $d (x,y,z) \geq L$ for all   $y,z \in \partial \zeta\setminus \{x\}$.  

    Case 1:  $ y,z\in \partial \zeta_a$. In this case the computations in the proof of Lemma~\ref{cloleththree} (for $\tau=\zeta_a$) apply and give $d (x,y,z) \geq L$.


    Case 2:  $ y \in \partial \zeta_a$ and $z \in \partial \zeta_b$ with $b\in B\setminus \{a\}$. Then  there are injective paths $p_x, p_y, p_{\ast }$ in $ \zeta_a$ leading respectively from $x,y,\ast_a$ to
   a vertex $v$ of   $\zeta_a$  and meeting solely in~$v$. Then
    $$d(x, y)=d_{\zeta_a}(x, y)= \ell(p_x)+\ell(p_y),$$
    $$d(x, z)=d_{\zeta}(x, z)= \ell(p_x)+\ell(p_{\ast }) +d_B(a, b)+  d_{\zeta_b}( z, \ast_b ),$$
    $$d(y, z)=d_{\zeta}(y, z)= \ell(p_y)+\ell(p_{\ast }) +d_B(a, b)+  d_{\zeta_b}( z, \ast_b ).$$
   Consequently,  $$ d(x,y,z)=  \frac{ d(x, y)+ d(x, z)-d(y,z)}{2} =\ell(p_x)  \geq L.$$ 

    Case 3:  $y \in \partial \zeta_b$ with $b\in B\setminus \{a\}$ and $ z \in \partial \zeta_a$. This case  is similar  to Case 2.

     Case 4:   $y, z \in \partial \zeta_b$ with $b\in B\setminus \{a\}$. Then $d(y, z)=d_{\zeta_b}(y, z)$ and
    $$d(x, y)=d_{\zeta}(x, y)=d_{\zeta_a}(x, \ast_a) +d_B(a, b)+  d_{\zeta_b}( y, \ast_b ),$$
    $$d(x, z)=d_{\zeta}(x, z)= d_{\zeta_a}(x, \ast_a) +d_B(a, b)+  d_{\zeta_b}( z, \ast_b ).$$
  Using the triangle inequality for  $d_{\zeta_b}$, we deduce that
 $ d(x,y,z)  \geq d_{\zeta_a}(x, \ast_a) \geq L$.

  Case 5:  $y  \in \partial \zeta_b$ and $z\in   \partial \zeta_c$ with distinct $b,c \in B\setminus \{a\}$. Then
    $$d(x, y)=d_{\zeta}(x, y)=d_{\zeta_a}(x, \ast_a) +d_B(a, b)+  d_{\zeta_b}( y, \ast_b ),$$
    $$d(x, z)=d_{\zeta}(x, z)= d_{\zeta_a}(x, \ast_a) +d_B( a,  c)+  d_{\zeta_c}( z, \ast_c ),$$
     $$d(y, z)=d_{\zeta}(y, z)= d_{\zeta_b}(y, \ast_b) +d_B( b,  c)+  d_{\zeta_c}( z, \ast_c ).$$
    Therefore
   $$
   d(x,y,z)= d_{\zeta_a}(x, \ast_a)+ \frac{ d_B(a,b)+ d_B(a,c)-d_B(b,c)}{2} \geq d_{\zeta_a}(x, \ast_a) \geq L.
  $$

     To prove that $ \underline{d}(x)=L$ it remains  to exhibit distinct  $y,z\in \partial \zeta\setminus \{x\}$ such that $d(x,y,z)=L$. 
     Suppose first that   the edge $e_x$ connects~$x$ to  $ \ast_a$. Since $B$ is   trim, $a$ lies between certain points $b,c \in B  $. Pick any $y\in \partial \zeta_b$ and $z\in \partial \zeta_c$. The computation in Case~5    yields  $d(x,y,z)= d_{\zeta_a}(x, \ast_a) = L$. Suppose now that $e_x$ connects~$x$ to a vertex $v \neq \ast_a$ of $\zeta_a$.
      Since  $\zeta_a$ is reduced,    $degree (v) \geq 3$. This   implies the existence of a leaf $y\in \partial \zeta_a \setminus \{x\}$  such that certain  injective paths $  p_y, p_{\ast }$ in $ \zeta_a$ lead  respectively from $ y,\ast_a$ to~$v$   and meet  solely in~$v$.  Pick    any $b\in B\setminus \{a\}$ and $z\in \partial \zeta_b$. The computation  in Case~2 (with $p_x=e_x$) shows that $d(x,y,z)=\ell(p_x) =L$. 

 The computation of~$\underline d$ above and  the definition of     $  d^\bullet$     imply that  $d^\bullet$ is the  pseudometric in $\partial \zeta= \partial \zeta^\bullet$ determined by the metric forest $\zeta^\bullet$. \end{proof}


   \begin{lemma}\label{lemma42e} Let~$\zeta$ be a   metric forest whose  base~$B $ is a trim metric space with $\geq 2$ points. Then at least one of the following  two claims hold:
    
   (i) the leaf space $\partial \zeta=(\partial \zeta, d_\zeta)$ of~$\zeta$ is obtained from~$B$ by a drift;
      
    (ii)    there is a   metric forest with less edges than~$\zeta$,  same base~$B$ as~$\zeta$  and the   leaf space   $\partial \zeta$ or $t(\partial \zeta)$.
  \end{lemma}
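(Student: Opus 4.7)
My plan is a case analysis on the combinatorial structure of the components of $\zeta$. The objective is either to exhibit $\partial\zeta$ as a drift of $B$ (case~(i)) or to construct a metric forest with fewer edges than $\zeta$, the same base $B$, and leaf space equal to $\partial\zeta$ or $t(\partial\zeta)$ (case~(ii)).

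As a preliminary reduction, if some component $\zeta_a$ has a non-root vertex $v$ of degree~$2$, I would eliminate $v$ and replace its two adjacent edges by a single edge whose length is their sum. The result is a metric forest with the same base~$B$, one fewer edge, and unchanged path pseudometric on $\partial\zeta$; this is already conclusion~(ii). Hence I may assume every component of $\zeta$ is a reduced rooted tree, placing us within the hypotheses of Lemma~\ref{lemma42enm}.

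Now I split into two sub-cases. If each $\zeta_a$ consists of a single edge from its root $\ast_a$ to its unique leaf $x_a$ of length $L_a\geq 0$, then $a\mapsto x_a$ is a bijection $B\to\partial\zeta$ and formula~\eqref{defdzeta} gives $d_\zeta(x_a,x_b)=L_a+d_B(a,b)+L_b$ for $a\neq b$, which is exactly the drift $d_B^f$ with $f(a)=L_a$; this is conclusion~(i). Otherwise some $\zeta_a$ has $|\zeta_a|\geq 2$, and Lemma~\ref{lemma54} supplies a pair of contiguous leaves in $\zeta_a$. By Lemma~\ref{lemma42enm} the leaf space $(\partial\zeta, d_\zeta^\bullet)$ equals the leaf space of $\zeta^\bullet$, in which edges adjacent to contiguous leaves have length~$0$; consequently the equivalence relation $\sim_{d_\zeta^\bullet}$ on $\partial\zeta$ has at least one non-trivial class. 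Moreover, for leaves $x\in\partial\zeta_a$, $y\in\partial\zeta_b$ with $a\neq b$, their $\zeta^\bullet$-distance is bounded below by $d_B(a,b)>0$, so every non-trivial class is contained in a single component.

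I would then form $\zeta'$ from $\zeta^\bullet$ by choosing one representative leaf per $\sim_{d_\zeta^\bullet}$-class and deleting the remaining leaves together with their unique adjacent edges. Reducedness is used to ensure that no such deletion creates a new leaf: the neighbor $v$ of any deleted leaf is either the root of its component (which by definition is not a leaf) or a non-root non-leaf vertex of $\zeta_a$, in which case $v$ has degree $\geq 3$ in $\zeta_a$ and so retains degree $\geq 2$ after one neighbor is removed. Hence $\partial\zeta'$ is exactly the set of representatives, equipped with the restriction of $d_\zeta^\bullet$, which is by construction a metric and agrees with $t(\partial\zeta)$; since at least one equivalence class is non-trivial, $|\zeta'|<|\zeta|$, yielding conclusion~(ii). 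The main obstacle lies precisely in this last step: one must verify simultaneously that the deletions yield a legitimate metric forest with no spurious new leaves, that no cross-component identifications occur, and that the resulting leaf space is exactly $t(\partial\zeta)$ rather than some coarser or finer quotient. All three points follow from the reducedness achieved in the preliminary step combined with Lemma~\ref{lemma42enm}.
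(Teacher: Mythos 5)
Your overall strategy mirrors the paper's: reduce to the case of reduced rooted tree components, split on whether $|\zeta|=\card(B)$ or some component has $\geq 2$ edges, invoke Lemma~\ref{lemma42enm} and Lemma~\ref{lemma54}, and build $\zeta'$ by deleting non-representative leaves. However, your preliminary reduction is incomplete and this creates a genuine gap. You only eliminate non-root vertices of degree~$2$; you do \emph{not} contract zero-length edges whose both endpoints have degree~$\neq 1$. The paper's notion of a \emph{reduced metric forest} requires both normalizations, and the second one is essential: it guarantees that every edge of $\zeta^\bullet$ not adjacent to a leaf has strictly positive length, which in turn is exactly what makes $\sim_{d^\bullet}$ coincide with contiguity.

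Without that second normalization, $\sim_{d^\bullet}$ can be strictly coarser than contiguity. Consider, for instance, a component with root $\ast$ of degree~$3$ adjacent to a non-root vertex $u$ of degree~$3$ by an interior edge of length~$0$, where $u$ is in turn joined by a zero-length interior edge to another degree-$3$ vertex $v$; attach leaves $z_1,z_2$ at $\ast$, a leaf $w$ at $u$, and leaves $x,y$ at $v$. All vertices have degree $\neq 2$, yet after the $\bullet$-operation $d^\bullet(w,x)=d^\bullet(w,y)=0$, so $\{w,x,y\}$ is a single $\sim_{d^\bullet}$-class spanning two distinct neighbors. If you choose $w$ as the representative, deleting $x$ and $y$ leaves $v$ with degree~$1$ — a spurious new leaf. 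The leaf space of your $\zeta'$ then has strictly more points than $t(\partial\zeta)$, and the two pseudometric spaces are not even of the same cardinality, so they cannot be isometric. Your sentence asserting that "$v$ has degree $\geq 3$ in $\zeta_a$ and so retains degree $\geq 2$ after one neighbor is removed" is also imprecise for a second reason: several leaves attached to the same vertex can be deleted simultaneously; the correct count (after the full reduction) is that a non-root vertex $v$ with $k$ contiguous leaves and $m\geq 1$ non-leaf neighbors loses $k-1$ leaves and keeps degree $1+m\geq 2$ — but this requires the class at $v$ to be exactly the contiguity class, which is what the zero-edge contraction is for.

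The fix is what the paper does: in the preliminary step, additionally contract any edge of length zero with both endpoints of degree~$\neq 1$. Each such contraction strictly reduces the number of edges while preserving the base and the leaf space, so it already lands in conclusion~(ii). Once $\zeta$ is reduced in the full sense, $\sim_{d^\bullet}$ is exactly contiguity within components, no spurious leaves are produced by the deletion, and the rest of your argument is correct.
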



   \begin{proof} Denote  by $\vert \zeta \vert$ the number of edges of~$\zeta$. 
  The definition of a metric forest implies that $\vert \zeta \vert \geq  \card(B)$.   If     $\vert \zeta \vert= \card(B)$, then each    tree $\zeta_a$  with $a\in B$  is  just an edge   with two vertices (one of them being  the root) and a certain length   $f(a)\geq 0$. This defines a function $f:B\to \RR $ such that  $ \partial \zeta=B^f$. So,    (i) holds.

   We    say that a metric forest  is    \emph{reduced} if   all its components are reduced rooted trees   having no edges  of length zero with both endpoints of degree $\neq 1$. If~$\zeta$ is not reduced, then eliminating vertices of degree 2 and contracting    edges
  as in the proof  of   Lemma \ref{leththree}, we   obtain    a     metric forest    satisfying  (ii).  

  It remains to prove (ii) in the case where  $\vert \zeta \vert >  \card(B)$ and $\zeta$ is   reduced.  Since  all   components of~$\zeta$ are reduced, Lemma~\ref{lemma42enm}  implies that   the pseudometric space $(\partial \zeta , d^\bullet)$ (where   $d=d_\zeta$) is the leaf space of the metric forest $\zeta^\bullet$
   obtained from~$\zeta$
   by changing to zero the lengths  of all edges adjacent to   leaves. Since $\zeta$ is reduced, the lengths of all other edges of~$\zeta^\bullet$ are non-zero.  This implies that two distinct leaves $u,v  $ of~$\zeta$ are related by the equivalence relation $\sim_{d^\bullet}$ (i.e., satisfy $d^\bullet (u,v)=0$) if and only if     they are contiguous leaves of  a tree component of~$\zeta$. Then the   metric space $t(\partial \zeta,d_\zeta) =(\widetilde {\partial \zeta} , \widetilde {d^\bullet} )$   is the leaf space of the metric forest $\zeta'$   obtained from $\zeta^\bullet$ by   deleting all but one   vertices (and the adjacent  edges) in each equivalence class of leaves.
     To check that $\vert \zeta'\vert <\vert \zeta \vert$, it suffices to note that the equivalence relation $\sim_{d^\bullet}$ in $\partial \zeta$ is non-trivial. Indeed,  the condition $\vert
 \zeta \vert  > \card (B)$ implies that~$ \zeta $ has a  tree  component with $\geq 2$ edges and     by Lemma~\ref{lemma54}  such a component has   contiguous leaves.      \end{proof}


%



 \subsection{Proof  of Theorem \ref{corol}.}  If $B=\{pt\}$, then $\zeta$ is    a   rooted tree and by Lemma~\ref{leththree}, $c(\partial \zeta)=\{pt\}=B$. If $\card(B)\geq 2$, then applying Lemma~\ref{lemma42e} recursively, we obtain that for a sufficiently big integer $N\geq 1$, the metric space       $t^N(\partial \zeta) $  is obtained from~$B$ by a drift. By Remark~\ref{example}.3,   $$t^{N+1}(\partial \zeta)=t(t^N(\partial \zeta))= t(B)   .$$ Since $B$ is trim, $t(B)=B$. Thus, $t^{N+1}(\partial \zeta)=B$ is trim and  $c(\partial \zeta) =B$.
 
 \subsection{Proof  of Theorem \ref{thh2}.} We first prove the transitivity of   the relation $\geq$ between finite pseudometric spaces (cf.\ the introduction).
Consider      metric forests $\zeta=\{\zeta_a\}_{a\in A}$    and  $\eta=\{\eta_b\}_{b\in B}$ with bases $A$ and $B$ respectively. We claim that if    $A=\partial \eta$, then   $\partial \zeta \geq B$. To see it,  for each $b\in B$    glue the trees $\eta_b $ and $ \{\zeta_a\}_{a }$ where $a\in A$ runs over the leaves of $\eta_b $. The   gluing goes by identifying each such~$a$  with the root of   $\zeta_a$ and produces a   tree $\mu_b$. We take     the root of $\eta_b$ as the root of $\mu_b$.   Clearly, the   metric forest $ \{\mu_b\}_{b\in B}$ has the same leaf space as~$   \zeta$. Hence, $\partial \zeta \geq B$.
  
  To prove Theorem \ref{thh2}, consider a   metric forest~$\zeta$ with base~$A$.  Then  $\partial \zeta\geq A$ and, by Theorem \ref{th1}, $A \geq c(A)$. Therefore $\partial \zeta\geq    c(A)$, so that there is a metric forest with leaf space   $\partial \zeta$ and   base  $c(A)$. Since $c(A)$ is trim,  Theorem~\ref{corol} implies that $c(\partial \zeta)  =c(A)$. Thus, $\partial \zeta$ is trim equivalent to~$A$.

                     \end{document}